\def\Ddots{\mathinner{\mkern1mu\raise\p@
\vbox{\kern7\p@\hbox{.}}\mkern2mu
\raise4\p@\hbox{.}\mkern2mu\raise7\p@\hbox{.}\mkern1mu}}
\titleformat*{\subsection}{\Large\bfseries}
\titleformat*{\subsubsection}{\large\bfseries}
\titleformat*{\paragraph}{\large\bfseries}
\titleformat*{\subparagraph}{\large\bfseries}
\theoremstyle{plain}
\newtheorem{thm}{Theorem}[section]
\newtheorem{cor}[thm]{Corollary}
\theoremstyle{definition}
\newtheorem{defn}[thm]{Definition}
\newcommand{\thistheoremname}{}
\newtheorem*{genericthm*}{\thistheoremname}
\newenvironment{namedthm*}[1]
  {\renewcommand{\thistheoremname}{#1}%
   \begin{genericthm*}}
  {\end{genericthm*}}
\newcommand{\N}{\mathbb{N}}
\newcommand{\Z}{\mathbb{Z}}
\newcommand{\F}{\mathcal{F}}
\newcommand{\p}{\mathcal{P}}
\newcommand{\bN}{\beta\mathbb{N}}
\newcommand{\FS}{\operatorname{FS}}
\date{\vspace{-5ex}}
\title{\textbf{On Polynomial Extensions of van der Waerden's Theorem and its Applications}}
\author{Sayan Goswami\\  \textit{sayan92m@gmail.com}\footnote{Ramakrishna Mission Vivekananda Educational and Research Institute, Belur, Howrah, 711202, India}}
\begin{document}

\maketitle
\begin{abstract}
In this article, we investigate polynomial generalizations of the van der Waerden theorem with a focus on largeness properties of recurrence patterns. We prove an $IP_r^\star$-strengthened version of the polynomial van der Waerden theorem, where the recurrence set is guaranteed to be large in a precise combinatorial sense.
As applications, we obtain new monochromatic polynomial configurations in both additive and multiplicative settings, including refined results over sum subsystems of IP-sets. Additionally, we prove exponential monochromatic patterns are abundant.
\end{abstract}

\noindent\textbf{Keywords:} Polynomial van der Waerden Theorem; IP$^\star$-sets and IP$_r^\star$-sets; Central sets; 
Sum subsystems; Partition regularity; Stone--Čech compactification; 
Ultrafilters; Exponential patterns.

\vspace{1mm}

\noindent\textbf{Mathematics subject classification 2020:} 05D10, 05C55, 22A15, 54D35.

\section{Introduction}

Arithmetic Ramsey theory deals with the monochromatic patterns found in any given finite coloring of the
integers or of the natural numbers $\N$. Here, ``coloring” means disjoint partition, and a set is called ``monochromatic” if it is included in one piece of the partition. Let $\F$ be a family of finite subsets of $\N.$ If for every finite coloring of $\N$, there exists a monochromatic member of $\F$, then such a family $\F$ is called a partition regular family. So basically, Ramsey theory is the study of the classifications of partitioned regular families. Arguably the first substantial development in this area of research was due to I. Schur \cite{sc} in $1916,$ when he proved that the family $\{\{x,y,x+y\}:x\neq y\}$ is a partitioned regular family over $\N.$
The second cornerstone development was due to Van der Waerden in $1927$ when he proved the following theorem.
\begin{thm}[\textbf{Van der Waerden theorem}, \cite{14}]\label{ Van der Waerden}
For any finite coloring of the natural numbers one always finds arbitrarily long monochromatic arithmetic progressions. In other words, the set of all arithmetic progressions of finite length is partitioned regularly.
\end{thm}
Nonlinear versions of classical Ramsey theoretic are difficult to prove. One of the first progress in this direction was due to V. Bergelson and A. Liebman in \cite{2}, where they proved the nonlinear version of van der Waerden's theorem, known as Polynomial van der Waerden's Theorem. The authors proved a generalized version of the much stronger Sz\'emeredi's Theorem, but we are not going to discuss it in this paper. They used methods of topological dynamics to prove their results.
Throughout this article, we assume $\mathbb{P}$ to be the set of all polynomials from $\Z$ to $\Z$ with no constant term.

\begin{thm} [\textbf{Polynomial van der Waerden Theorem}, \cite{2}]\label{pvdw}
Let $r\in \mathbb{N},$ and $\mathbb{N}=\bigcup_{i=1}^r C_i$ be a $r$-coloring of $\mathbb{N}$. Then for any finite collection of polynomials $F$ in $\mathbb{P}$, there exist $a,d\in \mathbb{N}$ and $1\leq j\leq r$ such that
$\{a+p(d):p\in F\}\subset C_j.$     
\end{thm}

\subsection{Hindman Theorem}
After Schur proved his result, an immediate question appeared: Does any infinitary extension exist of Schur's theorem? This was a conjecture of R. Graham and B. Rothschild until $1974,$ when N. Hindman \cite{finitesum} solved this conjecture. Before we state his theorem, we need some technical definitions.

\begin{defn}[\textbf{$IP$-set}]\label{ipset}
    Let $(S,+)$ be any commutative semigroup. Define

    \begin{enumerate}
        \item for any nonempty set $X,$ let $\p_f(X)=\{A\subseteq X:|A|<\infty\},$ and
        \item for any injective sequence $\langle x_n\rangle_n,$ define $\FS(\langle x_n\rangle_n)=\left\lbrace \sum_{t\in H}x_t:H\in \p_f(\N)\right\rbrace,$
        \item for any injective sequence $\langle x_n\rangle_n,$ and for $\alpha \in\mathcal{P}_{f}\left(\mathbb{N}\right),$ we write $x_\alpha =\sum_{n\in \alpha}x_{n}$,
        \item a set $A\subseteq S$ is said to be an $IP$ set if there exists an injective sequence $\langle x_n\rangle_n,$ such that $A=\FS(\langle x_n\rangle_n),$
        \item for any $r\in \N$, a set $A\subseteq S$ is said to be an $IP_r$ set if there exists a sequence $\langle x_n\rangle_{n=1}^r$ such that $A=\FS(\langle x_n\rangle_{n=1}^r)=\{\sum_{t\in H}x_t:(\neq \emptyset )H\subseteq \{1,2,\ldots ,r\}\}.$
    \end{enumerate}
\end{defn}

Our work uses the terminology $IP/IP_r$ set to denote the corresponding set in $(\N,+)$. A set $A$ is said to be an $IP^\star$ (resp.$IP_r^\star$ set) if $A$ intersects every $IP$ set ($IP_r$ set).
The following theorem is known as the Hindman Theorem.

\begin{thm}[\textbf{Hindman Theorem}, \cite{finitesum}]
    For every finite coloring of $\N$, there exists a monochromatic $IP$ set.
\end{thm}

\subsection{Polynomial van der Waerden Theorem}

In \cite{2}, V. Bergelson and A. Leibman proved a relatively stronger version of the polynomial van der Waerden theorem. Later, M. Walter \cite{9} found a simple combinatorial proof that uses color-focusing arguments. Then, in \cite{30}, N. Hindman found an algebraic proof.
%For some recent developments on polynomial progressions we refer to the articles \cite{quotient, milliken, bm}.
Before we address our main result, we need some technical terminologies.

For any commutative semigroup $(S,+),$ and $A\subseteq S,$ and $x\in S,$ let $-x+A=\{y:x+y\in A\}.$ A set $A$ is a  thick set if for any finite subset $F\subset S$, there exists an element $x\in S$ such that $F+x=\{f+x:f\in F\}\subset A.$ A set $A$ is a syndetic set if there exists a finite set $F\subset S$ such that $S=\bigcup_{x\in F}-x+A$, where $-x+A=\{y:x+y\in A\}$. A set $A$ is a piecewise syndetic set if there exists a finite set $F\subset S$ such that $\bigcup_{x\in F}-x+A$ is a thick set. Note that Piecewise syndetic sets are partitioned regular.

In this article, we prove the following stronger version of the Polynomial van der Waerden theorem, and then we study some of its applications.
\begin{thm}[\textbf{$IP^\star$  Polynomial van der Waerden Theorem}, \cite{2,30}] \label{ip*} 
Let $A\subseteq\mathbb{Z}$ be a 
piecewise syndetic set, and $F\in \mathcal{P}_f(\mathbb{P})$. Then 
\[
R=\left\{ n:\left\{ m:\left\{ m,m+p\left(n\right):p\in F\right\} \subset A\right\} \text{ is piecewise syndetic }\right\} \text{}
\]
 is an $IP^{\star}$ set.
\end{thm}

In this article, we prove a strengthening of Theorem \ref{ip*}, and then study some of its applications. 

The following theorem recently has been proved in \cite{x} by R. Xiao, using the methods from the Topological dynamics. But here our proof is different.

\begin{thm}[\textbf{$IP_r^\star$  polynomial van der Waerden theorem}] \label{ipr*} 
Let $A\subseteq\mathbb{N}$ be an  piecewise syndetic set 
and $F\in \mathcal{P}_f(\mathbb{P})$. Then there exists $r\in \N$ such that 
\[
\left\{ n:\left\{ m:\left\{ m,m+p\left(n\right):p\in F\right\} \subset A\right\} \text{ is piecewise syndetic }\right\} 
\]
 is an $IP_{r}^{\star}$ set.
\end{thm}

Then, applying this theorem, we prove several new monochromatic structures involving sum subsystems generalized to the polynomial settings.

\section{Preliminaries} 

\subsection{Preliminaries on Ultrafilters}
 
For a set $S$ let $\beta S$ be the set of all ultrafilters on $S$. For  
$s\in S$ by $s$ we mean the principal ultrafilter containing $s.$ For details, we refer to the  \cite{5} for the readers. 
%of all  subsets of $S$ that  contain $s$. 
If $(S,\cdot)$ is a semigroup, we can extend the operation ``$\cdot$''. 
 to a semigroup operation on $ \beta S$ by  
\begin{equation}\label{multdef} 
  A \in p \cdot q  \Leftrightarrow \ \{s\in S : s^{-1} A \in q\}\in p.       
\footnote{$S$ is a semigroup, so $s$ might not have an inverse. We may 
avoid this obstacle by defining $s^{-1} A:= \{t\in S: st\in A\}.$} 
\end{equation} 
With this operation, $\beta S$ becomes the Stone-\v{C}ech compactification of $S.$
Applications of the algebraic structure of $\beta S$ in partition Ramsey Theory  
are abundant. Examples are simple proofs of the theorems of Hindman and  
van der Waerden. Idempotent ultrafilters (i.e. ultrafilters $p\in \beta S$ 
satisfying $p\cdot p =p$) are connected with $IP$ sets: it can be shown that a set $A\subseteq S$ is $IP$ iff it is a member of an idempotent ultrafilter. Note for any topological semigroups $T,$ by $E(T)$ we denote the set of all idempotents of $T.$

$\beta S$ is always the smallest (two-sided) ideal which will be denoted by  
$K(\beta S)$. It turns out that for $(S,\cdot)=(\N, +)$ the elements of 
$K(\beta \N,+)$  are well suited for van der Waerden's Theorem. Idempotents in $K(\beta S,\cdot )$ (which are always present) are called 
\emph{minimal idempotents}. 
Not at all surprisingly minimal idempotents are   
particularly interesting for combinatorial applications. Subsets of 
$S$ which are contained in some minimal idempotent are called \emph{central sets}. On the other hand, a set $A\subseteq S$ is called \emph{piecewise syndetic set} if it is a member of an ultrafilter that belongs to $K(\beta S,\cdot).$ If $(S,+)$ is a commutative semigroup, then the notion of a piecewise syndetic set coincides with the definition that we mentioned in the introduction.

\subsection{Preliminaries on Polynomial Hales-Jewett Theorem}

Now we pause to recall the Hales-Jewett Theorem and it's polynomial extension. 
Let $\omega=\mathbb{N}\cup\left\{ 0\right\} $, where $\mathbb{N}$
is the set of positive integers. Given a nonempty set $\mathbb{A}$
called alphabet, a finite word is an expression of the form $w=a_{1}a_{2}\ldots a_{n}$
with $n\geq1$ and $a_{i}\in\mathbb{A}$. The quantity $n$ is called
the length of $w$ and denoted $\left|w\right|$. Let $v$ (a variable)
be a letter not belonging to $\mathbb{A}$. By a variable word over
$\mathbb{A}$ we mean a word $w$ over $\mathbb{A}\cup\left\{ v\right\} $
that has at least one occurrence of $v$. For any variable word $w$,
$w\left(a\right)$ is the result of replacing each occurrence of $v$
by $a$.

The following theorem is known as Hales-Jewett theorem, is due to
A. W. Hales and R. I. Jewett.
\begin{thm}
\textup{\cite[Hales-Jewett Theorem (1963)]{7}} \label{hj222}
For all values $t,r\in\mathbb{N}$, there exists a number $\text{HJ}\left(r,t\right)$
such that, if $N\geq\text{HJ}\left(r,t\right)$ and $\left[t\right]^{N}$
is $r$ colored then there exists a variable word $w$ such that $\left\{ w\left(a\right):a\in\left[t\right]\right\} $
is monochromatic.
\end{thm}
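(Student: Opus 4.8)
The plan is to prove the Hales--Jewett theorem by induction on the alphabet size $t$, via an iterated pigeonhole argument in the style of Shelah. Two preliminary remarks. First, it suffices to exhibit, for each $r,t$, a \emph{single} length $N$ for which every $r$-colouring of $[t]^N$ contains a monochromatic combinatorial line, since adjoining extra coordinates with a fixed value to the variable word shows that all larger lengths work too, and one then takes $\mathrm{HJ}(r,t)$ to be the least such $N$. Second, the base case $t=1$ is trivial: $[1]^N$ is a single point and the word $v\cdots v$ gives a (degenerate) monochromatic line.

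For the inductive step, assume the theorem for alphabet $t-1$ and every number of colours, fix $r$, and put $M=\mathrm{HJ}(r,t-1)$. I would split the coordinate set into consecutive intervals $I_1,\dots,I_M$ of sizes $N_1\gg N_2\gg\cdots\gg N_M$, chosen in the order $N_M,N_{M-1},\dots,N_1$ with each one enormously larger than a tower built from those already fixed, and set $N=N_1+\cdots+N_M$. The core is a \emph{merging step} performed once per interval, in the order $j=1,2,\dots,M$. Assuming that for each $i<j$ a nonempty wildcard block $S_i\subseteq I_i$ and a pattern $p_i$ on $I_i\setminus S_i$ have been chosen, consider the auxiliary colouring $\Gamma$ of $[t]^{I_j}$ whose value on a word $q$ records, \emph{for every assignment of a single letter of $[t]$ to each earlier block $S_1,\dots,S_{j-1}$ and for every choice of a word on each later interval $I_{j+1},\dots,I_M$ simultaneously}, the original colour $c$ of the point of $[t]^N$ so described. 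The number of colours of $\Gamma$ is finite and bounded in terms of $N_{j+1},\dots,N_M$ only, so if $N_j$ is large enough the pigeonhole principle applied to the $N_j+1$ ``staircase'' words on $I_j$ (the $i$-th constantly $t$ on the first $i$ coordinates and constantly $t-1$ on the rest) yields two of them, at positions $i<i'$, with the same $\Gamma$-colour. Declaring $S_j$ to be the coordinates of $I_j$ strictly between $i$ and $i'$, and $p_j$ the remaining ones (constantly $t$ up to $i$, constantly $t-1$ past $i'$), gives a nonempty wildcard block with the key property: switching the common letter on $S_j$ between $t-1$ and $t$ never changes $c$, \emph{whatever the configuration of all the other intervals}. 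This robustness clause is exactly what lets the merges for $j=1,\dots,M$ coexist without interfering.

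After all $M$ merges, define $c^{\ast}\colon[t]^M\to[r]$ by letting $c^{\ast}(a_1,\dots,a_M)$ be the $c$-colour of the point of $[t]^N$ that on each $I_j$ equals $p_j$ with $S_j$ filled by $a_j$. The merging property says precisely that $c^{\ast}$ is invariant under replacing any coordinate value $t$ by $t-1$ or vice versa, so $c^{\ast}$ factors through the quotient map $[t]^M\to[t-1]^M$ that identifies $t$ with $t-1$. Since $M=\mathrm{HJ}(r,t-1)$, the induced $r$-colouring of $[t-1]^M$ has a monochromatic combinatorial line; pulling it back along $(a_j)_j\mapsto$ (the point that is $p_j$ with $S_j\leftarrow a_j$) and using one more application of the merging property to raise the wildcard value from $t-1$ to $t$ produces a monochromatic combinatorial line of $[t]^N$, whose moving set is the nonempty union of those $S_j$ for which $j$ is a moving coordinate of the line in $[t-1]^M$.

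The main obstacle is not any single estimate but getting the inductive set-up of $\Gamma$ right: at stage $j$ the auxiliary colouring must already quantify over \emph{all} possible later configurations of the intervals $I_{j+1},\dots,I_M$, since only then is the merge it produces robust enough to survive the subsequent stages. Formulating this ``robust merging'' hypothesis correctly is the crux; it is also what forces the tower-type recursion $N_1\gg\cdots\gg N_M$ and hence the rapidly growing (but still primitive recursive) bound on $\mathrm{HJ}(r,t)$. With that in hand, the pigeonhole steps, the collapse $t\sim t-1$, and the appeal to $\mathrm{HJ}(r,t-1)$ are routine. (An alternative is the original Hales--Jewett proof, a double induction on $t$ and on the number of colours using a ``colour-focusing'' construction of combinatorial subspaces, but it gives far weaker bounds and is no simpler to set up.)
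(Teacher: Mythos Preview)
The paper does not prove this statement at all: Theorem~\ref{hj222} is simply quoted from \cite{7} as a classical result and used as a black box (indeed, only its polynomial extension, Theorem~\ref{PHJ}, is actually invoked later). So there is no ``paper's own proof'' to compare against.

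That said, your sketch is a correct outline of Shelah's primitive-recursive proof. The induction on $t$, the decomposition into $M=\mathrm{HJ}(r,t-1)$ blocks, the pigeonhole on the staircase words to merge the letters $t-1$ and $t$ within each block, and the final collapse to an $r$-colouring of $[t-1]^M$ are all correctly described, including the crucial point that the auxiliary colouring $\Gamma$ at stage $j$ must record the original colour for \emph{every} completion of the later intervals. One small quibble: in the base case $t=1$ the word $v\cdots v$ is a variable word over the empty alphabet, and $w(1)$ is the unique point, so the line is $\{w(1)\}$---this is fine but perhaps worth saying explicitly since the statement requires a variable word. Otherwise the argument is sound and, as you note, gives far better bounds than the original Hales--Jewett colour-focusing proof.
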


The word space $\left[t\right]^{N}$ is called Hales-Jewett space
or H-J space. The number $\text{HJ}\left(r,t\right)$ is called Hales-Jewett
number.

%The polynomial extension of the Hales-Jewett Theorem was proved by
%V. Bergelson and A. Leibman in \cite{key-3}, using the methods of
%topological dynamics, then proven combinatorially by M. Walter in
%\cite{key-9}. This theorem uses some special notation, which we will
%state now.
Before we recall the polynomial Hales-Jewett theorem we need some new notions.
For $q,N\in\mathbb{N}$, let $Q=[q]^{N}$. For $a\in Q$, $\emptyset\neq\gamma\subseteq[N]$
and $1\leq x\leq q$, $a\oplus x\gamma$ is defined to be the vector
$b$ in $Q$ obtained by setting $b_{i}=x$ if $i\in\gamma$ and $b_{i}=a_{i}$
otherwise.

In the statement of Theorem \cite[Polynomial Hales-Jewett Theorem]{9}, we have $a\in Q$ so that
$a=\langle\vec{a}_{1},\vec{a}_{2},\ldots,\vec{a}_{d}\rangle$ where
for $j\in\{1,2,\ldots d\}$, $\vec{a}_{j}\in[q]^{N^{j}}$ and we have
$\gamma\subseteq[N]=\{1,2,\ldots,N\}$. Given $j\in\{1,2,\ldots,d\}$,
let $\vec{a}_{j}=\langle a_{j,\vec{i}}\rangle_{\vec{i}\in N^{j}}$.
Then $a\oplus x_{1}\gamma\oplus x_{2}(\gamma\times\gamma)\oplus\ldots\oplus x_{d}\gamma^{d}=b$
where $b=\langle\vec{b}_{1},\vec{b}_{2},\ldots,\vec{b}_{d}\rangle$
and for $j\in\{1,2,\ldots,d\}$, $\vec{b}_{j}=\langle b_{j,\vec{i}}\rangle_{\vec{i}\in N^{i}}$
where 
\[
b_{j,\vec{i}}=\left\{ \begin{array}{rl}
x_{j} & \hbox{if}\,\vec{i}\in\gamma^{i}\\
a_{j,\vec{i}} & \hbox{otherwise.}
\end{array}\right.
\]

The following theorem is the Polynomial Hales-Jewett Theorem.
\begin{thm}
\textup{\label{PHJ} \cite[Polynomial Hales-Jewett Theorem]{9}}
For any $q,k,d$ there exists $N\left(q,k,d\right)\in\mathbb{N}$
such that whenever $Q=Q\left(N\right)=\left[q\right]^{N}\times\left[q\right]^{N\times N}\times\cdots\times\left[q\right]^{N^{d}}$
is $k$-colored there exist $a\in Q$ and $\gamma\subseteq\left[N\right]$
such that the set of points 
\[
\left\{ a\oplus x_{1}\gamma\oplus x_{2}\left(\gamma\times\gamma\right)\oplus\cdots\oplus x_{d}\gamma^{d}:1\leq x_{i}\leq q\right\} 
\]
 is monochromatic.
\end{thm}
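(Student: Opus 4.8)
The plan is to prove Theorem \ref{PHJ} by induction on the polynomial degree $d$, using the ordinary Hales--Jewett theorem (Theorem \ref{hj222}) as the engine. For the base case $d=1$ there is nothing new: $Q(N)=[q]^{N}$, and for a fixed $\emptyset\ne\gamma\subseteq[N]$ the set $\{a\oplus x_{1}\gamma:1\le x_{1}\le q\}$ is exactly the combinatorial line in $[q]^{N}$ whose moving coordinates are the positions in $\gamma$ and whose root is the restriction of $a$ to $[N]\setminus\gamma$. Hence taking $N=N(q,k,1)\ge\mathrm{HJ}(k,q)$ and applying Theorem \ref{hj222} to the given $k$-colouring of $[q]^{N}$ produces the required monochromatic polynomial line, with $\gamma\ne\emptyset$ because a variable word contains at least one occurrence of $v$.

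For the inductive step I would assume the statement for degree $d-1$ (for every alphabet size and every number of colours). The delicate point is that a single set $\gamma\subseteq[N]$ must \emph{simultaneously} linearise all of the blocks $[q]^{N^{j}}$, $1\le j\le d$, since the moving coordinates in the $j$-th block are forced to be $\gamma^{j}$. First I would isolate the top block by writing $[N]^{d}=[N]^{d-1}\times[N]$, so that $\gamma^{d}=\gamma^{d-1}\times\gamma$. Viewing $[q]^{N^{d}}$ as $\bigl([q]^{N^{d-1}}\bigr)^{N}$, the operation $\vec a_{d}\mapsto\vec a_{d}\oplus x_{d}\gamma^{d}$ leaves the $i$-th ``row'' untouched when $i\notin\gamma$ and applies a degree-$(d-1)$ modification (on $\gamma^{d-1}$, with value $x_{d}$) when $i\in\gamma$. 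I would then run two coupled colouring arguments on disjoint ranges of coordinates: one application of Theorem \ref{hj222} in the ``outer'' $[N]$-factor, over a large auxiliary alphabet whose letters encode the finitely many relevant degree-$(d-1)$ sub-patterns, to decide which rows of the top block move together; and one application of the degree-$(d-1)$ hypothesis to control the blocks of degree $\le d-1$ inside the combinatorial subspace left over by the first step. Concatenating the wildcard set of the outer Hales--Jewett application with the $\gamma$ produced by the inner induction should give the common $\gamma$.

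The hard part will be exactly this synchronisation, because a naive induction cannot apply the degree-$(d-1)$ hypothesis ``inside'' the structured set produced by the previous step: that set is a combinatorial subspace, not a full cube. I would handle this the way Walters \cite{9} does, by strengthening the inductive statement so that it applies over an arbitrary combinatorial subspace (equivalently, with a prescribed partial assignment of coordinates already frozen) and with each $x_{j}$ allowed to range over any prescribed finite set; with that flexibility the hypothesis can legitimately be invoked inside the subspace coming from the outer Hales--Jewett application, and careful bookkeeping of which coordinates have been used closes the induction. I would also record the cheap monotonicity fact that padding with dummy coordinates never destroys a polynomial line, which is what makes the ``disjoint coordinate ranges'' set-up legitimate and reduces the theorem to the existence of a single working $N$. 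An alternative route would be to deduce Theorem \ref{PHJ} from the polynomial van der Waerden / Bergelson--Leibman theorem via topological dynamics or ultrafilters, but the inductive reduction to Theorem \ref{hj222} keeps the argument elementary and self-contained.
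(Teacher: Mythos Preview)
The paper does not give its own proof of Theorem~\ref{PHJ}; it simply quotes the Polynomial Hales--Jewett theorem from Walters~\cite{9} as a black box and then uses it in the proof of Theorem~\ref{ipr*}. So there is no ``paper's proof'' to compare against.

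That said, your outline is broadly in the spirit of Walters' original combinatorial argument: induction on the degree~$d$, with the classical Hales--Jewett theorem as the base case and as the driver of the inductive step. Your identification of the genuine difficulty---that a single wildcard set~$\gamma$ must serve all blocks simultaneously, so one cannot simply apply the degree-$(d-1)$ hypothesis inside an arbitrary combinatorial subspace without first strengthening the statement---is exactly right, and your proposed fix (load the induction by allowing frozen coordinates and arbitrary finite value-sets for the $x_j$) is essentially what Walters does. The sketch is honest about where the real work lies; as written it is a plausible plan rather than a proof, but since the present paper treats Theorem~\ref{PHJ} as a citation, that is entirely appropriate here.
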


\section{Our Results}

\subsection{ $IP_r^\star$ Polynomial van der Waerden's theorem}\label{p}

As promised above, in this section we  strengthen Theorem \ref{ip*}. We provide both algebraic and combinatorial proofs. Both of these proofs have their own pros and cons. The Algebraic proof is short but uses Zorn's lemma, whereas the combinatorial proof is lengthy, a little complicated, but does not use Zorn's lemma.
%The following version of the polynomial van der Waerden's theorem is much stronger.

\begin{comment}

\begin{thm}
\label{ipr*} Let $A\subseteq\mathbb{N}$ be an additive piecewise syndetic set 
and $F\in \mathcal{P}_f(\mathbb{P})$. Then
\[
\left\{ n:\left\{ m:\left\{ m,m+p\left(n\right):p\in F\right\} \subset A\right\} \text{ is piecewise syndetic }\right\} 
\]
 is an $IP_{r}^{\star}$ set for some $r\in\mathbb{N}.$
\end{thm}
\end{comment}

\begin{proof}[Proof of Theorem \ref{ipr*}]
\textbf{(Combinatorial proof:)} As $A$ is a piecewise syndetic set, there exists a finite set $F_1\subset\mathbb{N}$
such that $B=\cup_{t\in F_1}\left(-t+A\right)$ is a thick set. 
Assume that $|F_1|=k.$
Let $\mathbb{A}$ be the set of all coefficients of the polynomials
of $F,$ and $d$ be the maximum degree of the polynomials of $F.$ 

To prove the given set is an $IP_r^\star$ set for some $r\in \N$, we need to show that the given set intersects with every set of the form $\FS(\langle x_{n}\rangle_{n=1}^{r})$ for some $r\in \N$. Our claim is that $r=N.$
Let $N=N\left(|\mathbb{A}|,k,d\right)\in \N$ be the polynomial Hales-Jewett number guaranteed by Theorem \ref{PHJ}. Let
$\langle x_{n}\rangle_{n=1}^{N}$ be any sequence in $\mathbb{N}.$
Let $Q=Q\left(N\right)$ be the set as in Theorem \ref{PHJ}. Define the map $\gamma:Q\rightarrow\mathbb{Z}$
by
\[
\gamma\left(\left(a_{1},\ldots,a_{N},a_{11},\ldots,a_{NN},\ldots,a_{11\cdots1(d\text{ times})},\ldots,a_{NN\cdots N(d\text{ times})}\right)\right)=
\]

\[
\sum_{i=1}^{n}a_{i}x_{i}+\sum_{i,j=1,1}^{N,N}a_{ij}x_{i}x_{j}+\cdots+\sum_{i_{1},i_{2},\ldots,i_{d}=1,1,\cdots,1}^{N,N,\cdots,N}a_{i_{1}i_{2}\cdots i_{d}}x_{i_{1}}x_{i_{2}}\cdots x_{i_{d}}.
\]

If necessary, translate the set $G=\left\{ \gamma\left(\overrightarrow{a}\right):\overrightarrow{a}\in Q\right\} $ by a sufficiently large number $M$ to assume that $G\subset \mathbb{N}$. Then for any finite coloring
of $G,$ there exists $a\in \N,$ and $d\in FS\left(\langle x_{n}\rangle_{n=1}^{N}\right)$ such that  $\left\{ a+p\left(d\right):p\in F\right\} \cup\left\{ a\right\}$ is monochromatic.

As $G$ is finite, the number of such patterns in $G$
is finite. Let $\mathscr{H}$ be the collection of such patterns
in $G$.
Let $|\mathscr{H}|=l$, $\mathscr{H}=\left\{ H_{i}:1\leq i\leq l\right\}$ and $F_1=\{f_i:1\leq i\leq k\}.$
As $G$ is finite, the collection $C=\left\{ x:G+x\subset B\right\} $
is a thick set.

Now construct a $[l]\times[k]$ coloring of $C$ by $x\in C$ has
color $\left(i,j\right)\in[l]\times[k]$ if and only if there exists
$f_j\in F_1$ and $H_{i}\in\mathscr{H}$ such that $\left\{ x+f_j+m:m\in H_{i}\right\} \subset A$
(one can choose the least $\left(i,j\right)$ in dictionary ordering).
Now $x\in C_{\left(i,j\right)}$ if and only if $x\in C$ and $x$
has color $\left(i,j\right).$ So, $C=\cup_{\left(i,j\right)\in[l]\times[k]}C_{\left(i,j\right)}.$
Hence at least one of $C_{(i,j)}'s$, say $C_{\left(s,t\right)}$
is a piecewise syndetic set. Hence there exists $H=\left\{ q+p\left(r\right):p\in F\right\} \cup\left\{ q\right\} \in \mathcal{H} $ such that for all $x\in C_{\left(s,t\right)}$
 $$\left\{ x+\left(t+q\right)+p\left(r\right):p\in P\right\} \cup\left\{ x+\left(t+q\right)\right\} \subset A.$$

As $C_{\left(s,t\right)}$
is a piecewise syndetic set, and as piecewise syndetic sets are translation invariant, we have $C_{\left(s,t\right)}+\left(t+q\right)$
is a piecewise syndetic set. This implies for all $m\in C_{\left(s,t\right)}+\left(t+q\right),$ we have $$\left\{ m\right\} \cup\left\{ m+p\left(d\right):p\in F\right\} \subset A,$$
for some $d\in FS\left(\langle x_{n}\rangle_{n=1}^{N}\right).$ Hence
\[
\left\{ n:\left\{ m:\left\{ m,m+p\left(n\right):p\in F\right\} \subset A\right\} \text{ is piecewise syndetic }\right\} 
\]
 is an $IP_{N}^{\star}$. This completes the proof.
\end{proof}

The key idea of the following algebraic proof is to use \cite[Theorem 4.39]{5} to reduce the problem from piecewise syndetic sets to syndetic sets.
\begin{proof}
\textbf{(Algebraic proof:)} Let $A$ be a piecewise syndetic set. Hence there exists
$p\in K\left(\beta\mathbb{N},+\right)$ such that $A\in p.$
Hence by \cite[Theorem 4.39]{5}, the set $S=\left\{ x:-x+A\in p\right\} $
is a syndetic set. Therefore we have a finite set $F\subset \N$ such that $\mathbb{N}=\cup_{t\in F}-t+S.$ 
Assume that $|F|=r.$ Proceeding along the same line of the above proof, let us choose $N=N\left(\mathbb{A},r,d\right)\in \N$,  a sequence $\langle x_{n}\rangle_{n=1}^{N}$ in $\N$, and the corresponding finite set $G\subset \N$.
As $G\subset\cup_{t\in F}-t+S,$
 there exists $(\emptyset\neq)H\subseteq\left\{ 1,2,\ldots,N\right\}$,
and $b\in\mathbb{N}$ such that $\left\{ b,b+f\left(x_{H}\right):f\in F\right\} \subset S.$
In other words  
\[
D=\left\{ d: \text{ there exists } m\text{ such that }\left\{ m,m+f\left(d\right):f\in F\right\} \subset S\right\} 
\]
 is $IP_{N}^{\star}$ set. Now for each $n\in D,$ there exists $m$
such that $\left\{ m,m+f\left(n\right):f\in F\right\} \subset S.$
Hence $$B=A\cap\left(-m+A\right)\cap\bigcap_{f\in F}-\left(m+f\left(n\right)\right)+A\in p.$$
As $B$ is a piecewise syndetic set, $B+m$ is also a piecewise syndetic, and note that for
each $a\in B+m,$ we have $\left\{ a,a+f\left(n\right):f\in F\right\} \subset A.$
This completes the proof.
\end{proof}

It is easy to check that $\overline{E\left(K(\bN,+)\right)}$ is a left ideal of $(\bN,\cdot)$, and so it contains a minimal left ideal of $(\beta \N,\cdot)$. Therefore we can choose $p\in E\left(K(\bN,\cdot)\right)\cap \overline{E\left(K(\bN,+)\right)}.$ By choice of $p,$ each member of $p$ is additive and multiplicative central set. The following corollary immediately follows from Theorem \ref{ipr*}.

\begin{cor}
Let $F\in \mathcal{P}_f(\mathbb{P})$, and
$p\in E\left(K(\bN,\cdot)\right)\cap \overline{E\left(K(\bN,+)\right)}.$ Then for every $A\in p,$
\[
\left\{ d \in A:\left\{ a:\left\{ a,a+p\left(d\right):p\in F\right\} \subset A\right\} \text{ is additive piecewise syndetic }\right\} 
\]
 is both additive and multiplicative piecewise syndetic. 
\end{cor}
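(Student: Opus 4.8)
The proof should be short: the displayed set is exactly $A\cap R$, where
\[
R=\left\{ n:\left\{ m:\left\{ m,m+p(n):p\in F\right\} \subset A\right\} \text{ is piecewise syndetic}\right\}
\]
is the set produced by Theorem \ref{ipr*}, so it suffices to prove $A\cap R\in p$. Indeed, as recorded immediately before the corollary, every member of $p$ is simultaneously an additive and a multiplicative piecewise syndetic set (this is the content of choosing $p\in E(K(\bN,\cdot))\cap\overline{E(K(\bN,+))}$), so once $A\cap R\in p$ is established there is nothing left to do.

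First I would check that Theorem \ref{ipr*} actually applies. Since $p\in\overline{E(K(\bN,+))}$, the basic clopen set $\overline A$ contains $p$ and hence meets $E(K(\bN,+))$, so $A$ lies in some minimal additive idempotent; in particular $A$ is (additively) piecewise syndetic. Theorem \ref{ipr*} then yields an $r\in\N$ for which the set $R$ above is $IP_r^\star$.

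The one substantive point is to see $R\in p$. Because $R$ is $IP_r^\star$ it is $IP^\star$, and an $IP^\star$ set lies in every idempotent of $(\bN,+)$ (if $\bN\setminus R$ were in some additive idempotent it would contain an $IP$ set disjoint from $R$). Thus $E(\bN,+)\subseteq\overline R$. Now $\overline R=\{q\in\bN:R\in q\}$ is clopen, in particular closed, so $\overline{E(\bN,+)}\subseteq\overline R$, and a fortiori $\overline{E(K(\bN,+))}\subseteq\overline R$. Since $p$ was chosen inside $\overline{E(K(\bN,+))}$, we get $R\in p$, whence $A\cap R\in p$, and the corollary follows from the largeness of members of $p$.

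The only obstacle worth flagging is this last step, and it is a soft one: the key is the observation that ``belonging to the fixed set $R$'' cuts out a \emph{closed} (indeed clopen) subset of $\bN$, which is what lets one upgrade ``$R$ lies in every minimal additive idempotent'' to ``$R$ lies in every ultrafilter in the closure of the minimal additive idempotents''. Everything else — that the ``piecewise syndetic'' appearing in Theorem \ref{ipr*} is the additive notion, that $IP_r^\star\Rightarrow IP^\star$, and that members of $p$ inherit both forms of largeness — is either explicitly stated in the excerpt or entirely routine.
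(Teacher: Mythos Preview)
Your proof is correct, and it differs from the paper's argument in an interesting way. Both proofs identify the displayed set as $A\cap D$ (your $R$), note that $A$ is additively piecewise syndetic because $p\in\overline{E(K(\bN,+))}$, and conclude by showing $D\in p$. The divergence is in this last step. The paper exploits the \emph{multiplicative} membership $p\in E(K(\bN,\cdot))\subseteq\overline{K(\bN,\cdot)}$: since $D$ is $IP_r^\star$ and every multiplicatively piecewise syndetic set contains an additive $IP_r$ set, one gets $\overline{K(\bN,\cdot)}\subseteq\overline{D}$, whence $D\in p$. You instead exploit the \emph{additive} membership $p\in\overline{E(K(\bN,+))}$: weakening $IP_r^\star$ to $IP^\star$ gives $E(\bN,+)\subseteq\overline{D}$, and closedness of $\overline{D}$ then forces $p\in\overline{D}$. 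Your route is arguably cleaner and, notably, would go through verbatim using only Theorem~\ref{ip*}; it therefore does not actually witness the added strength of Theorem~\ref{ipr*}. The paper's route, by contrast, genuinely requires the $IP_r^\star$ upgrade, since multiplicatively piecewise syndetic sets need not contain full additive $IP$ sets. So your argument is a valid and slightly more economical proof of the corollary, while the paper's argument better advertises why Theorem~\ref{ipr*} matters.
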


\begin{proof}
Given $p\in E\left(K(\bN,\cdot)\right)\cap \overline{E\left(K(\bN,+)\right)}.$
As $A\in p,$ we have  $A$ is an additive and multiplicative piecewise syndetic set. Hence from Theorem \ref{ipr*} we have
\[
D=\left\{ d:\left\{ a:\left\{ a,a+p\left(d\right):p\in F\right\} \subset A\right\} \text{ is additive piecewise syndetic }\right\} 
\]
 is an $IP_{r}^{\star}$ set for some $r\in \N.$ As for each $r\in \N$, every multiplicative piecewise syndetic set contains an $IP_r$ set, we have $\overline{K\left(\beta\mathbb{N},\cdot\right)}\subset\overline{D}.$
So, $D\in p$ and as $A\in p$, we have $A\cap D\in p.$ Hence we have our desired conclusion. 
\end{proof}

\subsection{Applications to Polynomial Patterns over Sum-Subsystems}\label{appl}

In this section, we deduce two new applications of Theorem \ref{ipr*}. The first one involves patterns in additively piecewise syndetic sets, where the domain of the polynomials can be chosen from any given multiplicative central set. And the second one involves the evaluation of a polynomial over the ``sum subsystem" of any given $IP$ set. 
%In both of these theorems, we will fix the translator $a(N)$ for each given $N\in \N$, which is a new concept. The following one is the first one.
\begin{thm}\label{D-1}
Let $F\in \mathcal P_f(\mathbb{P})$, $A$ be an additively piecewise syndetic set, and $B$ be a multiplicatively central set. Then there exists a sequence $\langle x_n \rangle_{n=1}^\infty$ such that
 \begin{enumerate}
  \item[(1)] $FP(\langle x_n \rangle_{n=1}^\infty)\subset B$, and
  \item[(2)] for each $N\in \mathbb{N}$, there exists $a(N)\in \mathbb{N}$ such that $$\left\lbrace a(N), a(N)+P(y): P\in F, y\in  FS(\langle x_n \rangle_{n=1}^N) \cup FP(\langle x_n \rangle_{n=1}^N)\right\rbrace\subset A.$$
 \end{enumerate}
 In addition for each $N\in \N$, such collection of $a(N)$ is piecewise syndetic.
\end{thm}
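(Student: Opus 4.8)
The plan is to run a double induction, combining the $IP_r^\star$-recurrence of the polynomial Van der Waerden theorem (Theorem \ref{ipr*}) with the algebraic structure of a multiplicatively central set $B$. Since $B$ is multiplicatively central, fix an idempotent $p \in E(K(\beta\N,\cdot))$ with $B \in p$. I want to build the sequence $\langle x_n\rangle_n$ one term at a time so that at stage $N$ the partial products $FP(\langle x_n\rangle_{n=1}^N)$ land in $B$, and simultaneously the set of polynomial-recurrence points for $FS(\langle x_n\rangle_{n=1}^N) \cup FP(\langle x_n\rangle_{n=1}^N)$ remains large enough (piecewise syndetic, even $IP_{r}^\star$ for a suitable $r$) to continue. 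The key device is that an $IP_r^\star$ set is a member of every ultrafilter in $\overline{K(\beta\N,\cdot)}$ — this is precisely the observation used in the corollary following Theorem \ref{ipr*} — so $IP_r^\star$ sets sit inside $p$ and interact well with the idempotent structure that produces $FP$-sequences.

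First I would set up the inductive hypothesis. Suppose $x_1,\dots,x_N$ have been chosen with $FP(\langle x_n\rangle_{n=1}^N) \subseteq B$ and with the property that, writing $S_N = FS(\langle x_n\rangle_{n=1}^N) \cup FP(\langle x_n\rangle_{n=1}^N)$, there is some $a(N)$ with $\{a(N), a(N)+P(y) : P\in F,\ y\in S_N\}\subseteq A$; I also carry along the stronger bookkeeping that the relevant recurrence set is a member of $p$. To choose $x_{N+1}$: consider $P' = F \cup \{P(\cdot + s) - P(s): P\in F, s \in \text{(the relevant finite index set)}\}$ or, more cleanly, apply Theorem \ref{ipr*} with the finite family $F$ and the piecewise syndetic set $A$ shifted appropriately so that the set $D$ of good ``$n$'' values is $IP_r^\star$ for some $r$. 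Because $D$ is $IP_r^\star$ it lies in $p$; intersecting with $B$ and with $\bigcap_{a\in FP(\langle x_n\rangle_{n=1}^N)} a^{-1} B$ (using that $p$ is a multiplicative idempotent, so $\{x : x^{-1}(\text{member}) \in p\} \in p$) gives a nonempty set from which to pick $x_{N+1}$. This guarantees simultaneously $x_{N+1} \in B$, every new product $x_{i_1}\cdots x_{i_k} x_{N+1} \in B$, and that $x_{N+1}$ is a valid recurrence value so that the new sums $a(N{+}1) + P(y)$ for $y$ involving $x_{N+1}$ stay in $A$.

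The main obstacle will be the interaction between the additive patterns demanded in clause (2) and the multiplicative structure in clause (1): the sums $FS(\langle x_n\rangle_{n=1}^N)$ and products $FP(\langle x_n\rangle_{n=1}^N)$ must both be simultaneously controlled, and passing from stage $N$ to $N+1$ changes both $FS$ and $FP$ at once — each new $x_{N+1}$ generates exponentially many new sums and products that all need to be recurrence values for (possibly shifted) polynomials at a single common base point $a(N+1)$. The resolution is that Theorem \ref{ipr*} gives an $IP_r^\star$ recurrence set, and $IP_r^\star$ sets are closed enough — they belong to $p$ — that one can absorb them into the idempotent-driven construction of the $FP$-sequence; one must be careful to apply the theorem to the finite family of all translated polynomials $\{P(\cdot) , \ P(\cdot + t) - P(t)\}$ needed to handle sums built from previously chosen terms, but this family is still finite, so Theorem \ref{ipr*} applies and yields a single $r = r(N)$ at each stage. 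Finally, the ``in addition'' clause — that for each fixed $N$ the collection of valid $a(N)$ is piecewise syndetic — is immediate from the construction, since at stage $N$ the base points $a(N)$ are exactly the elements of a translate of the piecewise syndetic set produced by iterating Theorem \ref{ipr*} finitely many times (intersecting finitely many ``inner'' piecewise syndetic sets of the form $\{a : \{a, a+P(y)\}\subseteq A\}$), and a finite intersection arising this way remains piecewise syndetic because at each step the inner set is piecewise syndetic and we only shift and intersect along the $IP_r^\star$ recurrence, which is handled by the same argument as in the proof of Theorem \ref{ipr*}.
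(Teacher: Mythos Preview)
Your overall architecture matches the paper's proof: fix a multiplicative idempotent $q\in E(K(\beta\N,\cdot))$ with $B\in q$, use that $IP_r^\star$ sets belong to every member of $\overline{K(\beta\N,\cdot)}$ and hence to $q$, and at each stage intersect the $IP_r^\star$ recurrence set from Theorem~\ref{ipr*} with $B^\star\cap\bigcap_{y\in FP}y^{-1}B^\star$ to choose $x_{N+1}$. That is exactly the paper's strategy.

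Two concrete points need fixing. First, your enlarged polynomial family only contains additive shifts $P(\cdot+t)-P(t)$, which handle the new elements of $FS(\langle x_n\rangle_{n=1}^{N+1})$; you also need multiplicative shifts $P_y(n)=P(yn)$ for each $y\in FP(\langle x_n\rangle_{n=1}^N)$ to capture the new products $y\cdot x_{N+1}$. The paper's family is $G=F\cup\{P_y\}\cup\{P^z\}$ with both kinds of shifts.

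Second, and more important, you apply Theorem~\ref{ipr*} to ``$A$ shifted appropriately'' and then justify the piecewise syndeticity of the set of valid $a(N)$ by claiming that a finite intersection of sets of the form $\{a:\{a,a+P(y)\}\subseteq A\}$ ``remains piecewise syndetic''. That is not true in general: finite intersections of piecewise syndetic sets need not be piecewise syndetic. The paper avoids this by applying Theorem~\ref{ipr*} not to $A$ but to $B_N=\{a:\{a,a+P(y):P\in F,\ y\in S_N\}\subseteq A\}$, which is piecewise syndetic by the inductive hypothesis. Then $B_{N+1}=\{b:\{b,b+f(x_{N+1}):f\in G\}\subseteq B_N\}$ is piecewise syndetic directly from the conclusion of Theorem~\ref{ipr*}, and a short case analysis (the paper's (A), (B), (C)) shows every $b\in B_{N+1}$ is a valid $a(N+1)$. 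This recursive application to $B_N$ rather than $A$ is the device that makes the ``in addition'' clause go through.
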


\begin{proof}
Given $B$ is a multiplicatively central set.
Hence there exists $q\in E(K(\beta \mathbb{N}, \cdot))$  such that $B\in q$.
By \cite[Lemma 4.14]{5}, we have  $B^*=\{n\in B: n^{-1}B\in q\}\in q$. As $A$ is additively piecewise syndetic set, from Theorem \ref{ipr*} we have $$D= \{d:\{a: \{a, a+P(d): P\in F\}\subset A\} \text { is a piecewise syndetic}\}$$ is an $IP_r^*$ for some $r\in \mathbb{N}$. Hence, $\overline{K(\beta \mathbb{N}, \cdot)}\subset \overline {D}$, and this implies $B^*\cap D\in q$.
Let $x_1\in B^*\cap D$.
Then there exists $a(1)\in \mathbb{N}$ such that $\{a(1), a(1)+P(x_1): P\in F\}\subset A$ and such collection of $a(1)$ is piecewise syndetic. Inductively assume that for some $N\in \mathbb{N}$, there exists a finite sequence $\langle x_n\rangle_{n=1}^N$ such that
\begin{enumerate}
 \item $FP(\langle x_n \rangle_{n=1}^N)\subset B^*$, and
 \item for $1\leq k \leq N$, $\exists$ $a(k)\in \mathbb{N}$ such that $$B_k=\{a: \{a, a+P(y): P\in F, y\in FS(\langle x_n \rangle_{n=1}^k) \cup FP(\langle x_n \rangle_{n=1}^k)\}\subset A\}$$ piecewise syndetic.
\end{enumerate}
Now $B_N$ is a piecewise syndetic set. For each $y\in FP(\langle x_n \rangle_{n=1}^N)$, $z\in FS(\langle x_n \rangle_{n=1}^N)$, and $P\in F$, define new polynomials $P_y, P^z \in \mathbb{P}$ by 

\begin{itemize}
    \item $P_y(n)=P(ny)$, and
    \item $P^z(n)=P(z+n)-P(z)$ $\forall n\in \mathbb{Z}$.
\end{itemize}

  Define $$G=F\cup \{P_y: P\in F, y\in FP(\langle x_n \rangle_{n=1}^N)\} \cup \{P^z: P\in F, z\in FS(\langle x_n \rangle_{n=1}^N)\},$$ and so $G\in \mathcal P_f(\mathbb{P})$.
Again from Theorem \ref{ipr*}, $$D_1=\{d: \{b: \{b, b+f(d): f\in G\}\subset B_N\}\text { is a piecewise syndetic set}\}$$ is an $IP_s^*$ for some $s\in \mathbb{N}$. From \cite[Lemma 4.14]{5}, we know that for each $y\in FP(\langle x_n \rangle_{n=1}^N)$, $y^{-1}B^*\in q.$ Hence we have $$E= D_1\cap \bigcap_{y\in FP(\langle x_n \rangle_{n=1}^N)}y^{-1}B^*\cap B^*\in q.$$
Choose $x_{N+1}\in E$, and so $FP(\langle x_n \rangle_{n=1}^{N+1})\subset B^*$. Again $x_{N+1}\in D_1$ and so  there exists $a(N+1)$ such that $$\left\lbrace a(N+1), a(N+1)+f(x_{N+1}): f\in G\right\rbrace\subset B_N.$$ Again the collection of such $a(N+1)$ is piecewise syndetic. Now,

\begin{enumerate}
    \item  $a(N+1)\in B_N$ implies $ a(N+1)\in A$, and for all $y\in FP(\langle x_n \rangle_{n=1}^N)\cup FS(\langle x_n \rangle_{n=1}^N)$, and $P\in F$ we have $a(N+1)+P(y)\in A$, \hfill{(A)}
%$a(N+1)+P(y)\in A$, $\forall y\in FP(\langle x_n \rangle_{n=1}^N)$. \hfill{(A)}

 \item  for all $P\in F, y\in FP(\langle x_n \rangle_{n=1}^N)$, $a(N+1)+P_y(x_{N+1})\in A$ implies for all $y\in FP(\langle x_n \rangle_{n=1}^{N+1})\setminus FP(\langle x_n \rangle_{n=1}^N),$ $a(N+1)+P(y)\in A,$ and \hfill{(B)}

 \item for all $P\in F, z\in FS(\langle x_n \rangle_{n=1}^N)$, $a(N+1)+P^z(x_{N+1})\in A$ implies for all $z\in FP(\langle x_n \rangle_{n=1}^{N+1})\setminus FP(\langle x_n \rangle_{n=1}^N)$ we have $a(N+1)+P(z)\in A.$ \hfill{(C)}
\end{enumerate}
Combining (A), (B), and (C), for all $P\in F$, and $y\in FS(\langle x_n \rangle_{n=1}^{N+1}) \cup FP(\langle x_n \rangle_{n=1}^{N+1})$, we have $a(N+1)+P(y)\in A$. 
As the collection of such $a(N+1)$ is piecewise syndetic, 
We proved the induction hypothesis, and this proves our result.

\end{proof}
For any finite coloring of $\N$, the following special case of Theorem \ref{D-1} says that we can choose the set $FP(\langle x_n \rangle_{n=1}^\infty)$ from the same color as of $a(N)+P(y)$.
\begin{cor}\label{imp}
    Let  $p\in E\left(K(\bN,\cdot)\right)\cap \overline{\left(K(\bN,+)\right)}$, and $A\in p.$ Then from Theorem \ref{D-1}, for any $F\in \mathcal P_f(\mathbb{P})$, there exists a sequence $\langle x_n \rangle_{n=1}^\infty$ such that
$FP(\langle x_n \rangle_{n=1}^\infty)\subset A$, and
  for each $N\in \mathbb{N}$, there exists $a(N)\in \mathbb{N}$ such that $$\{a(N), a(N)+P(y): P\in F, y\in FS(\langle x_n \rangle_{n=1}^N) \cup FP(\langle x_n \rangle_{n=1}^N)\}\subset A.$$
  In addition for each $N\in \N$, such collection of $a(N)$ is piecewise syndetic.
 \end{cor}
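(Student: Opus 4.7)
The plan is to derive Corollary \ref{imp} as a direct specialization of Theorem \ref{D-1} in which the additively piecewise syndetic set and the multiplicatively central set coincide, both being the given $A$. The whole argument therefore collapses to checking that every member of an ultrafilter $p \in E(K(\bN,\cdot)) \cap \overline{K(\bN,+)}$ enjoys both largeness properties simultaneously; once this is in hand, the conclusion (including the crucial extra statement that $FP(\langle x_n\rangle_{n=1}^\infty) \subset A$) is read off from Theorem \ref{D-1} with no further work.

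The first hypothesis, multiplicative centrality of $A$, is tautological: since $A \in p$ and $p \in E(K(\bN,\cdot))$, the definition of central supplied in the preliminaries gives it immediately.

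The second hypothesis, that $A$ is additively piecewise syndetic, is the one mildly nontrivial ingredient. Here I would invoke the standard characterization from Section 2 that $A \subseteq \N$ is additively piecewise syndetic precisely when $\overline{A} \cap K(\bN,+) \neq \emptyset$. Because $A \in p$, the ultrafilter $p$ lies in the basic clopen set $\overline{A}$, and because $p \in \overline{K(\bN,+)}$, every open neighborhood of $p$, in particular $\overline{A}$, must intersect $K(\bN,+)$ itself. This little topological juggle between a closure and a clopen basic set is the only step that requires any thought, and it is really just a one-line remark.

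With both hypotheses verified, I would feed Theorem \ref{D-1} the set $A$ in both roles. Conclusion (1) of its statement then specializes to $FP(\langle x_n\rangle_{n=1}^\infty) \subset A$, conclusion (2) produces the required values $a(N)$ that place the prescribed pattern inside $A$, and the ``in addition'' clause supplies the piecewise syndeticity of the admissible set of $a(N)$. Apart from the short topological argument for additive piecewise syndeticity, no obstacle is anticipated.
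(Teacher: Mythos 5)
Your proposal is correct and takes the same route the paper intends: Corollary \ref{imp} is stated as a direct specialization of Theorem \ref{D-1}, and the only work is verifying that $A$ plays both roles, which you do by the standard ultrafilter characterizations (multiplicative centrality from $p\in E(K(\bN,\cdot))$, additive piecewise syndeticity from $p\in\overline{K(\bN,+)}$ via the clopen neighborhood $\overline{A}$). The paper omits these verifications, so your filled-in version is the expected argument.
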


Sum subsystems of IP sets play important role in Ramsey theory. These systems appear in many Ramsey theoretic patterns to strengthen classical statements.
Here we recall the definition of sum subsystems.

\begin{defn} \textbf{(Sum subsystem)}
    For any $IP$ set  $FS\left(\langle x_{n}\rangle_{n}\right)$, a sum subsystem of  $FS\left(\langle x_{n}\rangle_{n}\right)$ is a set of the form $FS\left(\langle y_{n}\rangle_{n}\right)$, where for each $n\in \N,$ $y_n$ is defined as follows:

\begin{enumerate}
 \item there exists a sequence $\langle H_n\rangle_n$  in $\mathcal{P}_f(\N)$ such that for each $n\in \N$, $\max H_n<\min H_{n+1}$, and 
    \item $y_n=\sum_{t\in H_n}x_t$ for all $n\in \N$.
\end{enumerate}
\end{defn}

The following theorem is a variant of Theorem \ref{D-1}, where the polynomials are evaluated over sum-subsystems of a given $IP$ set. %Here we will consider additive $IP$ sets instead of multiplicative central sets. 
In the later section we will use this Theorem to prove the first improvement of \cite[Theorem 4.1.]{exp}.

%The following corollary is new. In the later section we will use this corollary to prove the first improvement of \cite[Theorem 4.1.]{exp}.
%However Theorem \ref{ip*} is sufficient to deduce the following corollary. So we will not use Theorem \ref{ipr*}. 

\begin{thm}
\label{ess1} Let $A$ be an additive piecewise syndetic set and $F\in \mathcal{P}_f(\mathbb{P})$. Then for every injective sequence
$\langle x_{n}\rangle_{n}$ there exists a sum subsystem $FS\left(\langle y_{n}\rangle_{n}\right)$
of $FS\left(\langle x_{n}\rangle_{n}\right)$ such that for every
$N\in\mathbb{N},$ there exists $a(N)\in\mathbb{N},$ such that 
$$\left\lbrace a(N)+p\left(y\right): y\in FS\left(\langle y_{n}\rangle_{n=1}^{N}\right)\cup FP\left(\langle y_{n}\rangle_{n=1}^{N}\right)\text{and }
p\in F\right\rbrace \subset A.$$
 In addition for each $N\in \N$, such collection of $a(N)$ is piecewise syndetic.
\end{thm}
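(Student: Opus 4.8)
The plan is to mimic the inductive construction in the proof of Theorem \ref{D-1}, replacing the multiplicatively central set $B$ by the idempotent ultrafilter witnessing that $FS(\langle x_n\rangle_n)$ is an $IP$ set, and extracting the sum subsystem from that ultrafilter rather than a fresh sequence. Concretely, fix $p\in E(\beta\mathbb{N},+)$ with $FS(\langle x_n\rangle_n)\in p$, and as usual pass to $B^* = \{n\in FS(\langle x_n\rangle_n) : -n+B\in p\}\in p$ where $B=FS(\langle x_n\rangle_n)$; recall that for $n\in B^*$ one has $-n+B^*\in p$ as well. The point is that every member of $p$ that we need will be intersected against $B^*$, and whenever we pick $y_{N+1}$ from such an intersection we automatically get $FS(\langle y_n\rangle_{n=1}^{N+1})\subset B^* \subset FS(\langle x_n\rangle_n)$, so the $y_n$'s do form a sum subsystem. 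To guarantee the $\max H_n < \min H_{n+1}$ condition, I would carry along (as is standard for extracting sum subsystems from idempotents, cf. Hindman–Strauss) an auxiliary requirement that $y_{N+1}\in FS(\langle x_n\rangle_{n>m_N})$ for a suitable $m_N$ larger than the indices used so far; this is arranged by additionally intersecting with the member $FS(\langle x_n\rangle_{n>m_N})\in p$ at each stage.

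The inductive step is essentially identical to that of Theorem \ref{D-1}. Having built $\langle y_n\rangle_{n=1}^N$ with $FP(\langle y_n\rangle_{n=1}^N)\cup FS(\langle y_n\rangle_{n=1}^N)$ controlled and a piecewise syndetic collection of valid $a(N)$ sitting inside a piecewise syndetic witness set $B_N$, I would form the enlarged family $G = F\cup\{P_y : P\in F,\ y\in FP(\langle y_n\rangle_{n=1}^N)\}\cup\{P^z : P\in F,\ z\in FS(\langle y_n\rangle_{n=1}^N)\}$, where $P_y(n)=P(ny)$ and $P^z(n)=P(z+n)-P(z)$, both in $\mathbb{P}$. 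Applying Theorem \ref{ipr*} to the piecewise syndetic set $B_N$ and the family $G$ gives that $D_1 = \{d : \{b : \{b,b+f(d):f\in G\}\subset B_N\}$ is piecewise syndetic$\}$ is an $IP_s^\star$ set for some $s$, hence $D_1\in p$ (since $p$ is an idempotent, $\overline{E(\beta\mathbb{N},+)}\subset\overline{D_1}$, so in particular $D_1\in p$). Now choose $y_{N+1}\in D_1\cap B^*\cap FS(\langle x_n\rangle_{n>m_N})$; this lies in $p$. The resulting $a(N+1)\in B_N$ then satisfies, via the algebraic identities $(z+n)$ and $(ny)$ encoded in $P^z$ and $P_y$, the three inclusions (A), (B), (C) exactly as in Theorem \ref{D-1}, and combining them yields $a(N+1)+P(y)\in A$ for all $P\in F$ and all $y\in FS(\langle y_n\rangle_{n=1}^{N+1})\cup FP(\langle y_n\rangle_{n=1}^{N+1})$, with the collection of such $a(N+1)$ piecewise syndetic. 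This closes the induction.

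The main obstacle — really the only substantive point beyond bookkeeping — is verifying that the algebraic decomposition of $FS\cup FP$ of the length-$(N+1)$ sequence into the length-$N$ part plus "new" elements goes through with $y$'s drawn from a sum subsystem exactly as it did for the $x$'s. For the $FP$ part this is literally the same computation: a new product $y\cdot y_{N+1}$ with $y\in FP(\langle y_n\rangle_{n=1}^N)$ is handled by $P_y$ evaluated at $y_{N+1}$. For the $FS$ part one must note that a new sum $z+y_{N+1}$ with $z\in FS(\langle y_n\rangle_{n=1}^N)$ is handled by $P^z$ at $y_{N+1}$, using $a(N+1)+P^z(y_{N+1}) = a(N+1)+P(z+y_{N+1})-P(z)$ together with the already-known membership $a(N+1)+P(z)\in A$ — wait, rather: one needs $B_N$ closed under the right shift so that $a(N+1)+P(z)\in B_N$ implies $a(N+1)+P(z)+ \bigl(P(z+y_{N+1})-P(z)\bigr)\in A$; this is exactly why $G$ was defined with $B_N$ (not $A$) as the target, and why the witness set at stage $N$ records membership for all of $FS(\langle y_n\rangle_{n=1}^N)\cup FP(\langle y_n\rangle_{n=1}^N)$ simultaneously. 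Once one is careful that the base-case witness set $B_1$ is defined with this "simultaneous" property, the induction propagates cleanly, and I would simply remark that the details are as in the proof of Theorem \ref{D-1}, the only change being the source of the sequence.
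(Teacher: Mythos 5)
Your proposal matches the paper's argument in all essentials: fix an idempotent $v$ with $FS(\langle x_n\rangle_n)\in v$, extract the sum subsystem by intersecting the recurrence set supplied by the polynomial van der Waerden theorem with $FS(\langle x_n\rangle_n)^\star$ and its shifts at each stage, and carry the witness set $B_N$ forward by enlarging the polynomial family to $G=F\cup\{P_y\}\cup\{P^z\}$ and applying the recurrence theorem to $B_N$ rather than to $A$. (The paper invokes Theorem~\ref{ip*} to obtain an $IP^\star$ set while you invoke Theorem~\ref{ipr*} to obtain $IP_r^\star$; both are members of every additive idempotent, so the difference is immaterial here. Your explicit bookkeeping $y_{N+1}\in FS(\langle x_n\rangle_{n>m_N})$ for the index-disjointness of the sum subsystem is a good point that the paper leaves implicit.)

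One small slip in your closing sentence is worth flagging, since it is the crux of the inductive step. The correct chain for a new sum $z+y_{N+1}$ with $z\in FS(\langle y_n\rangle_{n=1}^N)$ is \emph{not} that ``$a(N+1)+P(z)\in B_N$.'' What you actually have, from choosing $a(N+1)$ so that $\{a(N+1),a(N+1)+f(y_{N+1}):f\in G\}\subset B_N$ and the fact that $P^z\in G$, is
\[
a(N+1)+P^z(y_{N+1})=a(N+1)+P(z+y_{N+1})-P(z)\in B_N .
\]
Now apply the defining property of $B_N$ to this element $b=a(N+1)+P(z+y_{N+1})-P(z)\in B_N$ with the level-$N$ datum $z$: since $b\in B_N$ and $z\in FS(\langle y_n\rangle_{n=1}^N)$, we get $b+P(z)\in A$, i.e.\ $a(N+1)+P(z+y_{N+1})\in A$. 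This two-step deduction (first land in $B_N$ via $P^z$, then use the closure property of $B_N$ to add $P(z)$ and land in $A$) is exactly what the paper does in its Case~2, and it is the precise sense in which recording $FS\cup FP$ of the first $N$ terms ``simultaneously'' in $B_N$ pays off. With that corrected, the induction closes as you describe.
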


\begin{proof}
For any $v\in E\left(\beta\mathbb{N},+\right)$ and any $A\in v,$ let $A^{\star}=\{n\in A:-n+A\in v\}.$ As $v$ is an idempotent, we have  $A^{\star}\in v$.
Let $v$ be an idempotent such that $FS\left(\langle x_{n}\rangle_{n}\right)\in v,$
and so $FS\left(\langle x_{n}\rangle_{n}\right)^{\star}\in v.$ From
Theorem \ref{ip*}, we have $y_{1}\in FS\left(\langle x_{n}\rangle_{n}\right)^{\star}$
such that the set $B_{1}=\left\{ a:\left\{ a,a+p(y_{1}):p\in F\right\} \subset A\right\} $
is piecewise syndetic. So there exists an element $a\in B_{1}$ such
that $\left\{ a,a+p(y_{1}):p\in F\right\} \subset A.$ Inductively assume that for some $M\in\mathbb{N}$,

\begin{enumerate}
    \item   $FS\left(\langle y_{n}\rangle_{n=1}^{M}\right)\subset FS\left(\langle x_{n}\rangle_{n}\right)^{\star},$

\item For each $1\leq N\leq M,$ we have $$\left\lbrace a(N)+p\left(y\right): y\in FS\left(\langle y_{n}\rangle_{n=1}^{N}\right)\cup FP\left(\langle y_{n}\rangle_{n=1}^{N}\right)\text{and }
p\in F\right\rbrace \subset A,$$

\item For each $1\leq N\leq M,$ the set 
\[
B_{N}=\left\{ a:\left\{ a,a+p(y):p\in F,y\in FS\left(\langle y_{n}\rangle_{n=1}^{N}\right)\cup FP\left(\langle y_{n}\rangle_{n=1}^{N}\right) \right\} \subset A\right\} \text{ is piecewise syndetic. }
\]
\end{enumerate}
For each $p\in F,$ $y\in FS\left(\langle y_{n}\rangle_{n=1}^{M}\right),$ and $z\in FP\left(\langle y_{n}\rangle_{n=1}^{M}\right),$
define  new polynomials $p_{y},p'_z:\mathbb{N}\rightarrow\mathbb{N},$ by 
\begin{itemize}
    \item $p_{y}\left(n\right)=p\left(n+y\right)-p\left(y\right);$ and
    \item  $p'_{z}\left(n\right)=p(zn).$ 
\end{itemize}

Choose a
new collection of polynomials $$G=F\cup\left\{ p_{y}:p\in F,y\in FS\left(\langle y_{n}\rangle_{n=1}^{M}\right)\right\}\cup\left\{ p'_{z}:p\in F,z\in FP\left(\langle y_{n}\rangle_{n=1}^{M}\right)\right\} .$$
From Theorem \ref{ip*}, 
\[
D=\left\{ x:\left\{ c:\left\{ c,c+p(x):p\in G\right\} \subset B_{M}\right\} \text{ is piecewise syndetic }\right\} 
\]
 is an $IP^{\star}$ set. Note that if $A$ is an $IP^*$ set, and  $n\in \N,$ then $n^{-1}A$ is again an $IP^*$ set.
 Hence, 
\[
D_1=D\cap \bigcap_{z\in FS\left(\langle y_{n}\rangle_{n=1}^{M}\right)}z^{-1}D\cap FS\left(\langle x_{n}\rangle_{n}\right)^{\star}\cap\bigcap_{y\in FS\left(\langle y_{n}\rangle_{n=1}^{M}\right)}\left(-y+FS\left(\langle x_{n}\rangle_{n}\right)^{\star}\right)\in v.
\]
Choose $y_{M+1}\in D_1$, and an element $c_{1}\in E=\left\{ c:\left\{ c,c+p(y_{M+1}):p\in G\right\} \subset B_{M}\right\} .$
Hence $FS\left(\langle y_{n}\rangle_{n=1}^{M+1}\right)\subset FS\left(\langle x_{n}\rangle_{n}\right)^{\star}.$ Now we complete the induction.
%For each $z\in FS\left(\langle y_{n}\rangle_{n=1}^{M+1}\right)$,
%and each $p\in F,$

\begin{itemize}

   \item  \textbf{Case 1}: if $z\in FS\left(\langle y_{n}\rangle_{n=1}^{M}\right)\cup FP\left(\langle y_{n}\rangle_{n=1}^{M}\right) \cup\left\{ y_{M+1}\right\} $,
then for all $p\in F,$ $c_{1}+p\left(z\right)\in B_{M}\subset A,$ and

   \item  \textbf{Case 2:} if $z\in FS\left(\langle y_{n}\rangle_{n=1}^{M+1}\right) \setminus \left(FS\left(\langle y_{n}\rangle_{n=1}^{M}\right) \cup\left\{ y_{M+1}\right\}\right) $,
then we have $z=s+y_{M+1}$, for some $s\in FS\left(\langle y_{n}\rangle_{n=1}^{M}\right)$.

So we have $$c_{1}+p_{s}\left(y_{M+1}\right)=c_{1}+p\left(s+y_{M+1}\right)-p\left(s\right)\in B_{M},$$ 

But then $c_{1}+p\left(s+y_{M+1}\right)-p\left(s\right)+p\left(s\right)=c_{1}+p\left(z\right)\in A.$

\item \textbf{Case 3:} if $z\in FP\left(\langle y_{n}\rangle_{n=1}^{M+1}\right)\setminus \left( FP\left(\langle y_{n}\rangle_{n=1}^{M}\right) \cup\left\{ y_{M+1}\right\}\right) $, then we have and $z=ty_{M+1}$ for some $t\in FP\left(\langle y_{n}\rangle_{n=1}^{M}\right).$ Then

$$c_{1}+p'_{t}\left(y_{M+1}\right)=c_{1}+p\left(ty_{M+1}\right)=c_1+p(z)\in B_{M}.$$

\end{itemize}

Now rename $E=B_{M+1}$. Now for each $a\in B_{M+1}\subset A,$ we have
\[
\left\{ a+p\left(z\right):p\in F\text{ and }z\in FS\left(\langle y_{n}\rangle_{n=1}^{M+1}\right)\cup FP\left(\langle y_{n}\rangle_{n=1}^{M+1}\right)\right\} \subset A,
\]
 and the set $B_{M+1}$ is piecewise syndetic. This completes the induction.
\end{proof}

\subsection{Monochromatic Exponential Patterns are Abundant}

In 1916, Schur \cite{sc} proved that the pattern  $\{x, y, x + y: x\neq y\}$ is  partition regular. A natural multiplicative analogue of Schur's theorem asserts that the set $\{x, y, x \cdot y\}$ is also partition regular. In $2012$, Sisto \cite{sisto} proved that for any $2$-coloring of $\mathbb{N}$, there exist distinct numbers $x \ne y$ such that the set $\{x, y, x^y\}$ is monochromatic. He further conjectured that this result should hold for arbitrary finite colorings. This conjecture was later confirmed by Sahasrabudhe \cite{4} in $2016$. Infact he proved a more stronger version: exponential version of finitary Hindman theorem. The infinitary version has been recently proved in \cite{6,exp}.

\begin{thm}[{Exponential Hindman Theorem}]\label{wow}
    For every finite coloring (partition) of $\mathbb{N}$, there exists an injective sequence $\langle x_n \rangle_n$ such that
    \[
    \operatorname{FEP}(\langle x_n \rangle_n) = \left\{ x_{i_n}^{x_{i_{n-1}}^{\cdots^{x_{i_1}}}} : 1 \le i_1 < \cdots < i_{n-1} < i_n,\, n \in \mathbb{N} \right\}
    \]
    is monochromatic.
\end{thm}

In this section, we strengthen the above theorem by showing that for each $i\in \mathbb{N},$ the element $x_i$ can be chosen from sets that are sufficiently rich in combinatorial structure. We show the following theorem is an improved version of Theorem \ref{wow}.

\begin{thm}\label{p1} Let $\langle N_i\rangle_i$ be a sequence in $\N.$ Then for every finite coloring of $\N,$ there exists a sequence of finite sets $\langle G_i\rangle_i$ such that $|G_i|=N_i$ for every $i\in \N,$ such that the pattern
$$\left\lbrace x_{i_n}^{x_{i_{n-1}}^{\cdot^{\cdot^{\cdot^{x_{i_1}}}}}}:x_j\in FP(G_j)\, \forall j\in \N, \text{ and } 1\leq i_1<\cdots <i_{n-1}<i_n, n\in \N\right\rbrace$$
is monochromatic.  
\end{thm}

Let $(S,\cdot)$ and $(T,\cdot)$ be two discrete semigroups, and $p\in \beta S$ and  $q\in \beta T$. Then the tensor product of $p$ and $q$ is defined as
\[
p\otimes q=\{A\subseteq S\times T: \{x\in S: \{y:(x,y)\in A\}\in q\}\in p\}
\]
where $x\in S$, $y\in T.$

Consider a groupoid operation $\star$ over $\beta\N$ defined as follows: first, consider the operation $f:\N^{2}\rightarrow \N$ defined as $f(n,m)=2^{n}m$. Let $\overline{f}:\beta\left(\N^{2}\right)\rightarrow\beta\N$ be its continuous extension. $\star$ is the restriction of $\overline{f}$ to tensor pairs, namely: for all $p,q\in\beta\N$
\[
p\star q=\overline{f}(p\otimes q)=2^{p}\odot q.
\]
The following theorem implies the Exponential version of the Hindman theorem.

\begin{thm}\cite{6, exp}\label{DiR} 
 Let $p\in E\left(K(\beta\mathbb{N},\oplus)\right)$
and $A\in p\star p$. For every $N\in\mathbb{N}$ and for every sequence
$\varPhi=\left(f_{n}:n\geq2\right)$ of functions $f_{n}:\mathbb{N}\rightarrow\mathbb{N},$
there exists a sequence $\left(a_{k}\right)_{k\in\mathbb{N}}$ such
that
\[
\mathcal{F}_{k,\varPhi}\left(a_{k}\right)_{k\in\mathbb{N}}=\left\{ a_{k}\cdot2^{\sum_{i=1}^{k-1}\lambda_{i}a_{i}}:k\in\mathbb{N},0\leq\lambda_{1}\leq N,\text{ and }0\leq\lambda_{i}\leq f_{i}\left(a_{i-1}\right)\text{ for }2\leq i\leq k-1\right\} \subset A.
\]
\end{thm}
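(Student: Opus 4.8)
The plan is to mimic the inductive strategy used in the proofs of Theorem \ref{D-1} and Theorem \ref{ess1}, but now carrying the induction through the operation $\star$ on $\beta\N$ and the tensor-product structure rather than through ordinary addition. The key observation is that $p\star p=2^p\odot p$, so that $A\in p\star p$ means $\{n:2^{-n}\cdot(\text{something})\}$ unwinds into a statement about $p$; concretely, $A\in p\star p$ iff $\{n:\{m:2^n m\in A\}\in p\}\in p$. Since $p\in E(K(\beta\N,\oplus))$ is an additive idempotent lying in the minimal ideal, every member of $p$ is additively central, hence additively piecewise syndetic, and moreover $p=p\oplus p$ gives the usual ``starred set'' machinery: for $B\in p$, $B^\star=\{n\in B:-n+B\in p\}\in p$.

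First I would set up the induction. Let $C=\{n:\{m:2^nm\in A\}\in p\}\in p$, and for $n\in C$ write $A_n=\{m:2^nm\in A\}\in p$. The goal is to build $\langle a_k\rangle_k$ so that at stage $k$ the partial pattern $\{a_k\cdot 2^{\sum_{i=1}^{k-1}\lambda_i a_i}:\ldots\}$ sits inside $A$. I would phrase the inductive hypothesis so that after choosing $a_1,\dots,a_k$ there is a single member $D_k\in p$ with the property that for every admissible tuple $(\lambda_1,\dots,\lambda_{k-1})$ and every $d\in D_k$, the element $d\cdot 2^{\sum \lambda_i a_i}$ lies in $A$ — i.e. $D_k$ plays the role of the piecewise syndetic ``reservoir'' $B_N$ in Theorem \ref{ess1}, except here membership in $p$ is what we track. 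The base case uses the Hales–Jewett / combinatorial fact implicit in $p\in E(K(\beta\N,\oplus))$: the linear configuration $\{a_1\cdot 2^{\lambda_1 a_1}:0\le\lambda_1\le N\}$ — a finite geometric progression in the exponent scaled by $a_1$ — can be found because the relevant recurrence set is additively piecewise syndetic, hence a member of $p$ along the lines already used in this section.

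The inductive step is the heart of it. Given $D_k\in p$ and $a_1,\dots,a_k$, I need to pick $a_{k+1}$ so that, for every new exponent digit $0\le\lambda_k\le f_k(a_{k-1})$ (only finitely many choices, since $f_k(a_{k-1})$ is a fixed natural number once $a_{k-1}$ is chosen), multiplying by $2^{\lambda_k a_k}$ keeps us inside $A$, while also preserving room for all future stages. This is where the ``$n^{-1}A$ is again large'' trick from the proof of Theorem \ref{ess1} reappears: for each of the finitely many values $2^{\lambda_k a_k}$ I intersect $D_k$ with $(2^{\lambda_k a_k})^{-1}D_k$ (a member of $p$ since $p$ is multiplicatively compatible on these translates — more precisely, I work inside $\overline{K(\beta\N,\cdot)}$ or use that the relevant sets are $IP_r^\star$, hence in $p$), together with the starred set $D_k^\star$, and also with $\{n:2^n(\cdots)\in A\}$-type sets coming from the $\star$-structure. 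The intersection is still in $p$, so I can choose $a_{k+1}$ in it; then I define $D_{k+1}$ to be the corresponding recurrence set, again a member of $p$, and verify that all three ``cases'' — old tuples with $\lambda_k=0$, old tuples with $\lambda_k>0$, and the freshly-enlarged pattern — land in $A$, exactly as Cases 1–3 are handled in the proof of Theorem \ref{ess1}.

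The main obstacle I anticipate is bookkeeping the interaction between the additive structure in the exponent ($\sum\lambda_i a_i$, controlled by the idempotent $p$ under $\oplus$) and the multiplicative structure at the base ($a_k\cdot 2^{(\cdots)}$, and the need for translates $(2^{\lambda a})^{-1}D$ to remain in $p$): one must be careful that the ultrafilter $p$ — an additive idempotent — genuinely sees these multiplicative translates as large, which is precisely why the statement is phrased with $A\in p\star p$ and the operation $f(n,m)=2^n m$ rather than with $A\in p$ directly. Unwinding $p\star p=\overline f(p\otimes p)$ correctly, so that each application of ``shrink to a smaller member of the ultrafilter'' is legitimate, is the delicate point; the rest is the same induction-with-new-polynomials template already executed twice in Section 3, here specialized to the polynomials (really linear maps) $n\mapsto\lambda_k n$ and the exponential rewriting $m\mapsto 2^m$.
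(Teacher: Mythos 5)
The theorem you are asked to prove is only \emph{cited} in this paper (it is \cite[Theorem~3.1]{6}, due to Di Nasso and Ragosta); the paper does not give its own proof. However, the paper does prove two direct generalizations — Theorem~\ref{polynew} and Theorem~\ref{D-2} — by exactly the argument that would be used here, so those proofs serve as the reference.

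Your proposal contains a genuine gap, and it is precisely at the spot you yourself flag as ``the delicate point.'' Your reservoir $D_k$ is forced to be $\bigcap_{s} A_s$, where $s$ ranges over the admissible sums $\sum_{i<k}\lambda_i a_i$ and $A_s=\{m:2^s m\in A\}$. For $D_k\in p$ you need each $A_s\in p$, i.e.\ each nonzero admissible $s$ to lie in $B=\{n:A_n\in p\}$. Already at the second stage this requires $\{a_1,2a_1,\dots,Na_1\}\subseteq B$. But a central set need not contain any configuration $\{d,2d\}$: one of the two classes $\{n:\nu_2(n)\text{ even}\}$, $\{n:\nu_2(n)\text{ odd}\}$ must be central, yet in either class $\nu_2(d)$ and $\nu_2(2d)=\nu_2(d)+1$ always have opposite parity. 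So ``intersect $D_k$ with $(2^{\lambda a})^{-1}D_k$, a member of $p$'' is not available; $p$ is an additive idempotent and those multiplicative translates are simply not in $p$, nor are they $IP_r^\star$, and the base-case assertion ``additively piecewise syndetic, hence a member of $p$'' is a non sequitur. (The base-case configuration $\{a_1\cdot 2^{\lambda_1 a_1}\}$ is also not part of the theorem's pattern, where the base index $k$ strictly exceeds the exponent indices $i\le k-1$.)

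The missing ingredient, visible in the proof of Theorem~\ref{polynew}, is an auxiliary \emph{shift}. One never tries to put $\lambda a_i$ into $B$; instead one finds, via polynomial van der Waerden with the difference drawn from a prescribed $IP$ set (Theorem~\ref{ess1}, or the Central Sets Theorem), a pair $(b,a_1)$ with $a_1\in FS(\langle x_{1,i}\rangle)\subset A$ and $\{b,\,b+a_1,\dots,b+Na_1\}\subset B^\star$. Since the shifted arithmetic progression lies in $B^\star$, the sets $A_{b+\lambda a_1}$ all lie in $p$, and one puts $C_2=\bigcap_{\lambda\le N}A_{b+\lambda a_1}\in p$. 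The shift is then pre-absorbed into the next base: one chooses an $IP$ set $FS(\langle n_{2,i}\rangle)\subseteq C_2^\star$ and sets $x_{2,i}=2^{b}n_{2,i}$, so that for $a_2\in FS(\langle x_{2,i}\rangle)$ one has $a_2\cdot 2^{\lambda a_1}=2^{b+\lambda a_1}\cdot(\text{element of }C_2)\in A$. Iterating with the cumulative shift gives the theorem. Your proposal never introduces this shift and so cannot be completed as written; the hand-wave toward $\overline{K(\beta\N,\cdot)}$ does not apply because nothing places $p$ or the translates there.
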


For each $i\in \N$, choosing $f_i$ suitably, we have the exponential Hindman theorem.

 Let $N=1$, and choose the function $f_2(x)=2^x.$ Inductively assume that we have defined the function $f_n(x)$ for some $n\in \N_{>1}$. Now define $f_{n+1}(x)=(2^x)^{f_n(x)}$.  Now choosing $\varPhi=\left(f_{n}:n\geq2\right)$, Theorem \ref{DiR} immediately implies the existence of a monochromatic Hindman tower.

For every $p\in \mathbb{P},$ let $deg(p)$ be the degree of the polynomial $p,$ and let $coef(p)=\max \{|c|: c\text{ is a coefficient of }p\}.$ For $n\in \N_{>1},$ define a new operation $\star_n$ on $\N$ by $a\star_nb=n^ab.$
The following theorem was proved in \cite{exp}.

\begin{thm} \textup{\cite[Theorem 4.1.]{exp}} \label{poly}
Let $p\in E\left(K(\beta\mathbb{N},\oplus)\right)$
and $q\in \bN$ be such that for every $N\in \N$, each element of $q$ contains an $IP_N$ sets. Let $n\in \N_{>1},$  $A\in p\star_n q,$ and  $F_{1}\in \mathcal{P}_f(\mathbb{P})$. Let $\varPhi=\left(f_{n}:n\geq2\right)$ be a sequence
of functions $f_{n}:\mathbb{N}\rightarrow\mathbb{N}.$ 
For every $n(>1),x\in \N$, let $F_{n,x}=\{p\in \mathbb{P}:deg(p)\leq f_n(x)\text{ and } coef(p)\leq f_n(x)\}.$ 

Under this hypothesis we have a sequence $\left(x_{k}\right)_{k\in\mathbb{N}}$
such that

\[
\mathcal{PF}^n_{k,\varPhi}\left(x_{k}\right)_{k\in\mathbb{N}}=\left\{ x_{k}\cdot n^{\sum_{i=1}^{k-1}p_{i}\left(x_{i}\right)}:k\in\mathbb{N},p_{1}\in F_{1},\text{ and }p_{i}\in F_{i,f_{i}\left(x_{i-1}\right)}\text{ for }2\leq i\leq k-1\right\} \subset A.
\]
\end{thm}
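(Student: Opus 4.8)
The plan is to mimic the inductive architecture already in place for Theorem \ref{DiR} in \cite{6} and for Theorem \ref{poly} in \cite{exp}, replacing the analytic tools with the strengthened recurrence Theorem \ref{ipr*}, \ref{mipr*}, and Theorem \ref{ess1}. Because $A \in p \star_n q$, by definition of $\star_n$ we have $\{m : n^m B \in q \text{ for a suitable } B\} \in p$; more precisely, writing $\overline{f_n}$ for the continuous extension of $(a,b)\mapsto n^a b$, membership $A \in p \star_n q$ unfolds to $\{a : a^{-1}_{\star_n} A \in q\} \in p$, where $a^{-1}_{\star_n}A = \{b : n^a b \in A\}$. The idea is to select the $x_k$ one at a time, so that at stage $k$ the ``tail set'' of admissible continuations still lies in $q$ (hence is multiplicatively piecewise syndetic and contains $IP_N$ sets for all $N$), while simultaneously recording a decreasing chain of sets in $q$ that encode all the exponential constraints $x_k \cdot n^{\sum_{i<k} p_i(x_i)} \in A$ that must eventually be met.

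First I would set up the base case: since $A \in p \star_n q$, the set $A^{\star_n} = \{a \in A_0 : a^{-1}_{\star_n}A \in q\}$ is in $p$ for an appropriate witness $A_0$, and then Theorem \ref{mipr*} applied to the multiplicatively piecewise syndetic sets arising from $q$ furnishes the first increment of freedom in the exponents. Concretely, for $k=1$ the pattern element is just $x_1$ together with $x_1 \cdot n^{p_1(x_1)}$ for $p_1 \in F_1$, so I want $x_1$ chosen in a set that is $IP_r^\star$ (from Theorem \ref{mipr*}) and also in the relevant member of $q$; the intersection is nonempty because every member of $q$ contains $IP_r$ sets and an $IP_r^\star$ set meets every $IP_r$ set. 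For the inductive step, assume $x_1, \dots, x_M$ chosen with all the required memberships and with a piecewise-syndetic ``anchor'' set $B_M$ recording the partial sums realized so far. As in the proof of Theorem \ref{ess1}, I would introduce shifted polynomials: for each already-fixed partial exponent $s = \sum_{i=1}^{M} p_i(x_i)$ and each candidate polynomial $q \in F_{M+1, f_{M+1}(x_M)}$, define $q_s(x) = $ (the polynomial capturing $n^{s} \cdot n^{q(x)}$ contributions), absorb all these into a single finite family $G \in \mathcal P_f(\mathbb P)$, invoke Theorem \ref{ipr*}/\ref{mipr*} to get that the set of good $x_{M+1}$ is $IP_r^\star$ for some $r$, intersect with the member of $q$ forced by the chain $A^{\star_n}$ (using the idempotent-type absorption $x^{-1}_{\star_n}(\cdot) \in q$ analogous to \cite[Lemma 4.14]{5}), and pick $x_{M+1}$ from the (nonempty) intersection. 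One then checks, exactly in the style of Cases 1--3 of the proof of Theorem \ref{ess1}, that every new pattern element $x_k \cdot n^{\sum_{i<k}p_i(x_i)}$ for $k \le M+1$ lands in $A$, splitting on whether the new index $M+1$ participates in the sum and, if so, peeling off the term $p_{M+1}(x_{M+1})$ via the shifted polynomial.

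The main obstacle I anticipate is the bookkeeping in the inductive step: unlike Theorem \ref{DiR}, where the exponent is linear ($\sum \lambda_i a_i$) and a single shift suffices, here the exponents are \emph{polynomial} in the $x_i$ and the degree/coefficient bounds $F_{i, f_i(x_{i-1})}$ depend on the previously chosen term, so the finite family $G$ of auxiliary polynomials that must be fed into Theorem \ref{ipr*} at stage $M+1$ grows with $M$ and must be assembled carefully; moreover one must verify that the two ``directions'' — the $\star_n$ structure (which governs which member of $q$ is active) and the recurrence structure (which governs the $IP_r^\star$ sets) — are compatible, i.e. that the intersection taken at each stage is genuinely in $q$. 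This is where the hypothesis ``every element of $q$ contains $IP_N$ sets for all $N$'' is essential, and where I would lean on the fact (noted in Section 2) that multiplicatively piecewise syndetic sets contain $IP_r$ sets for every $r$, so that the $IP_r^\star$ sets produced by Theorem \ref{mipr*} cannot miss the active member of $q$. Once these compatibility checks are in place, the verification that the full pattern $\mathcal{PF}^n_{k,\varPhi}(x_k)_{k \in \mathbb N} \subset A$ is routine, and taking the resulting sequence $(x_k)_{k\in\mathbb N}$ completes the proof.
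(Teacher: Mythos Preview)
The paper does not prove Theorem \ref{poly}; it is quoted verbatim from \cite[Theorem 4.1]{exp} as background for the paper's own improvements, Theorems \ref{polynew} and \ref{D-2}. So there is no proof in this paper to compare against directly. The closest analogue is the paper's proof of Theorem \ref{D-2}, which follows exactly the inductive architecture you describe: unfold $A\in p\star_n q$ to $B=\{m:\{n':n^m n'\in A\}\in q\}\in p$; at each stage apply Theorem \ref{ipr*} to the current additively piecewise syndetic set $B_M^\star\in p$ with a suitably enlarged polynomial family (absorbing the factor $n^{m_1}$ into the polynomials, e.g.\ $p_r(z)=p(n^{m_1}rz)$); obtain an $IP_r^\star$ recurrence set $E_M$; observe that $E_M\in q$ because every member of $q$ contains an $IP_r$ set; pick the next element from $E_M\cap D_M\in q$; and iterate. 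Stripping the extra $R_k$-structure from that proof and replacing ``$q\in K(\beta\N,\cdot)$'' by the weaker $IP_N$-richness hypothesis (which still forces every $IP_r^\star$ set into $q$) yields a proof of Theorem \ref{poly}.

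Your overall strategy matches this, but several details are misassigned. The recurrence input is Theorem \ref{ipr*}, not Theorem \ref{mipr*}: the piecewise syndetic set to which recurrence is applied is $B^\star$ on the \emph{$p$-side} (additive, since $p\in E(K(\beta\N,+))$), not anything on the $q$-side; nothing in the hypotheses makes members of $q$ multiplicatively piecewise syndetic. Likewise, the ``idempotent-type absorption $x^{-1}_{\star_n}(\cdot)\in q$'' you invoke is unavailable, since $q$ is not assumed idempotent; what actually keeps the $q$-chain alive is simply the definition of $B$: once $b+p_1(x_1)\in B$, the set $\{n':n^{b+p_1(x_1)}n'\in A\}$ lies in $q$ automatically. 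Finally, your base case is misread: for $k=1$ the exponent $\sum_{i=1}^{0}p_i(x_i)$ is empty, so the only element is $x_1$; the expression $x_1\cdot n^{p_1(x_1)}$ never occurs in the pattern. With these corrections your outline becomes the proof of Theorem \ref{D-2} specialized to singletons, which is a valid proof of Theorem \ref{poly}.
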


Now we find two refinements of Theorem \ref{poly}.

\subsection{Proof of Theorem \ref{p1}}

The following theorem is the first improvement of Theorem \ref{poly}. As a corollary we have a monochromatic Hindman tower each of whose terms coming from  $MIP_r$ sets of a given length.

\begin{thm}\label{polynew} Let $n\in \N_{>1},$ $\langle N_i\rangle_{i\in \N}$ be a sequence in $\N$, $F_{1}\in \mathcal{P}_f(\mathbb{P})$, $p\in E(\overline{\left(K(\beta\mathbb{N},+)\right)})$ , $q\in \overline{E((\bN,+))}$ and  $A\in p\star_n q$. Let $\varPhi=\left(f_{n}:n\geq2\right)$ be a sequence
of functions $f_{n}:\mathbb{N}\rightarrow\mathbb{N}.$ 
For every $n \in \N_{>1}, G\in \mathcal{P}_f(\N)$, let $F_{n,G}=\{p\in \mathbb{P}:deg(p)\leq f_n(x)\text{ and } coef(p)\leq f_n(x) \text{ for }x=\max G\}.$ 
%\textcolor{blue}{I do not understand what you mean here. Rewrite it in a comprehensible form.}
Under this hypothesis we have a sequence of finite sets $\langle G_i\rangle_{i\in \N}$ such that

\begin{enumerate}
    \item[(1)] for each $i\in \N,$ $|G_i|=N_i$, and
    %$G_i=\langle x_{i,k}\rangle_{k=1}^{N_i}$, and

\item[(2)]  $$\mathcal{PF}^n_{k,\varPhi}\left(G_{k}\right)_{k\in\mathbb{N}}=\left\{ z_{k}\cdot n^{\sum_{i=1}^{k-1}p_{i}\left(y_{i}\right)}:k\in\mathbb{N},p_{1}\in F_{1},y_j\in FS(G_j)\cup FP(G_j)\text{ for }1\leq j\leq k-1, \right.$$
$$\,\,\,\,\,\,\,\,\,\,\,\,\,\,\,\,\,\,\,\,\,\,\,\,\,\,\,\,\,\,\,\,\,\,\,\,\,\,\,\,\,\,\,\,\,\,\,\,\,\,\,\,\,\,\,\,\,\,\,\,\,\,\,\,\,\,\,\,\,\,\,\,\,\,\,\,\,\,\,\,\,\,\,\,\,\,\,\,\,\,\,\,\,\,\,\,  z_k\in FS(G_k) \text{ and }\, p_{i}\in F_{i,f_{i}\left(G_{i-1}\right)}\text{ for }2\leq i\leq k-1 \Big\} \subset A.$$
\end{enumerate}
\end{thm}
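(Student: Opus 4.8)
The plan is to mimic the inductive construction used in the proof of Theorem~\ref{poly} (the polynomial Di Nasso--Ragosta theorem of \cite{exp}), but at each stage of the induction replace the choice of a single element $x_k$ by the choice of a finite set $G_k$ of the prescribed size $N_k$ whose finite sums \emph{and} finite products behave correctly. The essential new ingredient that makes this possible is Theorem~\ref{ess1} (and its underlying combinatorial engine, Theorem~\ref{ipr*}), which lets us evaluate a finite family of polynomials not just on a single large element but simultaneously on all of $FS(G)\cup FP(G)$ for a suitable finite set $G$, while keeping the relevant ``recurrence'' set piecewise syndetic.

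First I would unpack the hypothesis $A\in p\star_n q$. Writing $p\star_n q$ for the image of $p\otimes q$ under the extension of $(a,b)\mapsto n^a b$, membership $A\in p\star_n q$ means $\{a:\{b: n^a b\in A\}\in q\}\in p$; equivalently, setting $A_a=\{b:n^a b\in A\}$, the set of $a$ with $A_a\in q$ lies in $p$. Since $p\in E(\overline{K(\beta\mathbb N,+)})$, each such set is additively central (in particular additively piecewise syndetic), and since $q\in\overline{E(\beta\mathbb N,+)}$, each $A_a\in q$ contains an $IP$ set, hence for every $N$ an $IP_N$ set and (after one more application of the Rado/linear-Rado machinery recalled in Section~2) a multiplicative $IP_N$ set. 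This is exactly the dichotomy of large-set structure — ``outer'' coordinate additively central, ``inner'' coordinate containing long additive and multiplicative $IP$-patterns — that the induction will exploit.

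Next comes the induction itself. Having built $G_1,\dots,G_M$ of sizes $N_1,\dots,N_M$ together with the partial exponential-polynomial pattern landing in $A$, and with an appropriate ``target'' piecewise syndetic set $B_M$ recording the admissible base points, I would: (i) form the enlarged finite family $G$ of polynomials obtained from $F_1$ (and the already-fixed $F_{i,f_i(G_{i-1})}$) by the two standard tricks — the shifts $p\mapsto p_y$, $p_y(x)=p(x+y)-p(y)$ for $y\in FS(G_M)$, and the dilations $p\mapsto p'_z$, $p'_z(x)=p(zx)$ for $z\in FP(G_M)$ — so that controlling $p$ on $G_{M+1}$ alone forces control on all of $FS$ and $FP$ of $G_1\cup\dots\cup G_{M+1}$; (ii) apply Theorem~\ref{ipr*}/Theorem~\ref{ess1} to $B_M$ and $G$ to get that the set of good ``increments'' is $IP_r^\star$ for some $r$, intersect this with the relevant star-sets coming from $q$ (so that $FS(G_{M+1})$ and $FP(G_{M+1})$ stay inside the good $q$-set, using that $z^{-1}(\text{good set})$ and $-y+(\text{good set})$ remain large), and then extract from this large set a \emph{finite} subset $G_{M+1}$ with $|G_{M+1}|=N_{M+1}$ and $FS(G_{M+1})\cup FP(G_{M+1})$ contained in the intersection — here is where the $IP_r^\star$ strengthening, rather than mere $IP^\star$, is used, since a single $IP^\star$ set need not contain a prescribed-size $FS\cup FP$ configuration, but the intersection of finitely many large sets with a multiplicatively piecewise syndetic witness does; (iii) update $B_{M+1}$ and verify, by the same three-case bookkeeping (new element $y_{M+1}$; sums $s+y_{M+1}$ handled by $p_s$; products $t y_{M+1}$ handled by $p'_t$) as in Theorem~\ref{ess1}, that the extended pattern lies in $A$ and that $B_{M+1}$ is piecewise syndetic. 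Taking the union of the $G_k$ over all $k$ gives the claimed sequence of finite sets.

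The main obstacle I anticipate is the simultaneous control of the \emph{three} algebraic structures at once: additive sums of the $G_i$, multiplicative products of the $G_i$, and the exponential layering via $\star_n$. Each step must feed a set that is good with respect to $q$ (so that the next $FS(G)\cup FP(G)$ can be found inside it) while also being a piecewise-syndetic recurrence set for the growing polynomial family; reconciling these — in particular checking that after the shift-and-dilate enlargement of the polynomial family the hypotheses of Theorem~\ref{ipr*} still apply and that the finite set $G_{M+1}$ of the exact size $N_{M+1}$ can still be extracted — is the delicate point. The role of the hypotheses $p\in E(\overline{K(\beta\mathbb N,+)})$ and $q\in\overline{E(\beta\mathbb N,+)}$ is precisely to guarantee, respectively, the piecewise-syndeticity (indeed centrality) needed to invoke Theorem~\ref{ipr*} on the outer coordinate and the $IP$-richness needed on the inner coordinate; once those are in place the rest is a (lengthy but routine) iteration of the argument of Theorem~\ref{ess1}.
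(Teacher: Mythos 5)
Your high-level plan matches the paper's: unpack $A\in p\star_n q$ into an outer additively piecewise syndetic set (from $p$) and an inner $q$-set containing additive $IP$ sets, induct, enlarge the polynomial family by the shifts $p_y(x)=p(x+y)-p(y)$ and dilations $p'_z(x)=p(zx)$, and close the induction with the same three-case bookkeeping (new element, sum $s+y_{M+1}$, product $t\,y_{M+1}$). Step (iii) is exactly the paper's argument.

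However, step (ii) — the actual extraction of the finite set $G_{M+1}$ with $|G_{M+1}|=N_{M+1}$ — is where the proposal goes wrong. You propose to find a prescribed-size $FS\cup FP$ configuration inside an intersection of $IP_r^\star$ sets and $q$-sets, appealing to ``the intersection of finitely many large sets with a multiplicatively piecewise syndetic witness'' and to the linear Rado machinery. But no multiplicatively piecewise syndetic set is available here: $q\in\overline{E(\beta\mathbb{N},+)}$, so the inner sets $C_k\in q$ are only guaranteed to contain \emph{additive} $IP$ sets, and the Rado compactness argument recalled in Section~2 requires multiplicative piecewise syndeticity, which these sets need not have. Likewise, an $IP_r^\star$ set (or a finite intersection of them) merely meets every $IP_r$ set; it need not itself contain an $FS\cup FP$ configuration of prescribed length. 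So as stated, your mechanism for producing $G_{M+1}$ does not go through.

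The paper's actual mechanism sidesteps this: from $C_k\in q$ it extracts an additive $IP$ set $FS(\langle n_{k,i}\rangle)\subseteq C_k$, multiplies by the accumulated exponent $n^{(\cdots)}$ to land an $IP$ set $FS(\langle x_{k,i}\rangle)\subset A$, and then applies Theorem~\ref{ess1} to the piecewise syndetic set $B_k^\star$ and \emph{this} $IP$ set. Theorem~\ref{ess1} returns a sum subsystem $\langle y_n\rangle$ of $\langle x_{k,i}\rangle$ together with a piecewise syndetic set of base points $a$ such that all polynomial evaluations on $FS(\langle y_n\rangle_{n=1}^N)\cup FP(\langle y_n\rangle_{n=1}^N)$ land in $B_k^\star$; taking $G_k$ to be the first $N_k$ terms does the job. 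This gives $FS(G_k)\subset A$ for free (as a sum subsystem of $FS(\langle x_{k,i}\rangle)\subset A$), and the polynomial control on $FP(G_k)$ without needing $FP(G_k)\subseteq A$ — note the theorem's conclusion requires $z_k\in FS(G_k)$ only, so demanding a multiplicative $IP_N$ set inside $A_a$ is both unnecessary and unavailable. Crucially, Theorem~\ref{ess1} itself runs entirely on the $IP^\star$ version (Theorem~\ref{ip*}) and the additive idempotent ultrafilter witnessing the $IP$ set; the $IP_r^\star$ strengthening plays no role in \emph{this} theorem, in contrast to Theorem~\ref{D-1}.
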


\begin{proof}
To avoid the complexity in the calculation we prove this theorem up to $k=3.$

The rest of the part can be finished inductively. The technique is verbatim.

Let $p\in E\left(K(\beta\mathbb{N},+)\right)$, $q\in E((\bN,+))$ and
$$A\in p\star_n q=E_1(n,p)\cdot q.$$ Then
$$B=\left\{ m_0:\left\{ n_0:n^{m_0}n_0\in A\right\} \in q\right\} \in p.$$
Let $m_{1}\in B^{\star}$ and  $C_{1}= \left\{ n':n^{m_1}n'\in A\right\} \in q.$
Let $F_{1}\in \mathcal{P}_f(\mathbb{P})$.

 Hence there exists an $IP$ set $$FS(\langle n_{1,i} \rangle_{i\in \N})\subseteq C_{1}\in q.$$ 
For each $i\in \N,$ let $x_{1,i}=n^{m_{1}}n_{1,i}\in A$.  Now $FS(\langle x_{1,i}\rangle_{i\in \N} )\subset A.$ By Theorem \ref{ess1}, there
exists $a_{1}\in B^{\star}$, and $G_1\subset FS(\langle x_{1,i}\rangle_{i\in \N} )$ such that $|G_1|=N_1,$ and $$\Big\{ a_{1},a_{1}+p_{1}\left(y_{1}\right):p_{1}\in F_{1}, y_1\in FS(G_1)\cup FP(G_1)\Big\} \subset B^{\star}.$$

So, $$B_{2}=B^{\star}\cap\left(-a_{1}+B^{\star}\right)\cap\bigcap_{p_{1}\in F_{1}}\bigcap_{y_1\in FS(G_1)\cup FP(G_1)}-\left(a_{1}+p_{1}\left(y_{1}\right)\right)+B^{\star}\in p.$$

Let $$C_{2}=C_{1}\cap\bigcap_{p_{1}\in F_{1}}\bigcap_{y_1\in FS(G_1)\cup FP(G_1)}\left\{ n':n^{a_{1}+p_{1}\left(y_{1}\right)}n'\in A\right\} \cap\left\{ n':n^{a_{1}}n'\in A\right\} \in q. $$

Hence there exists an $IP$ set $$FS(\langle n_{2,i}\rangle_{i\in \N} )\subseteq C_{2}\in q.$$
For each $i\in \N,$ let $n^{a_{1}}n_{2,i}=x_{2,i}\in A$. Hence $FS(\langle x_{2,i}\rangle_{i\in \N})\subseteq A.$ And for each $y_1\in FS(G_1)\cup FP(G_1),$ and $y\in FS(\langle x_{2,i}\rangle_{i\in \N}),$ and $p_{1}\in F_{1}$, we have
$yn^{p_{1}\left(y_{1}\right)}\in A.$

As $FS(\langle x_{2,i}\rangle_{i\in \N})\subseteq A$ is an $IP$ set, from Theorem \ref{ess1}, we can choose a $G_2\subset FS(\langle x_{2,i}\rangle_{i\in \N})$ such that $|G_2|=N_2$ and

$$\left\{ a_{2},a_{2}+p_{2}\left(y_2\right):p_{2}\in F_{2,f_{2}\left(y_{1}\right)}, y_2\in FS(G_2)\cup FP(G_2)\right\} \subset B_{2}^{\star}.$$
Letting $c=a_{1}+a_{2}$, we have
\[
\left\{ c+p_{1}\left(y_{1}\right)+p_{2}\left(y_{2}\right):p_{1}\in F_{1},p_{2}\in F_{2,f_{2}\left(y_{1}\right)},y_i\in FS(G_i)\cup FP(G_i)\text{ for }1\leq i\leq 2\right\} \subset B^{\star}.
\]
 Let $$C_{3}=C_{1}\cap\bigcap_{p_{1}\in F_{1}}\bigcap_{p_{2}\in F_{2,f_{2}\left(x_{1}\right)}}\bigcap_{1\leq i\leq 2}\bigcap_{y_i\in FS(G_i)\cup FP(G_i)}\left\{ n':n^{c+p_{1}\left(y_{1}\right)+p_{2}\left(y_{2}\right)}n'\in A\right\} \in q$$
and $$B_{3}^{\star}=B^{\star}\cap\bigcap_{p_{1}\in F_{1}}\bigcap_{p_{2}\in F_{2,f_{2}\left(x_{1}\right)}}\bigcap_{1\leq i\leq 2}\bigcap_{y_i\in FS(G_i)\cup FP(G_i)}-(c+p_{1}\left(x_{1}\right)+p_{2}\left(x_{2}\right))+B^{\star}\in p.$$
Choose an $IP$ set $FS(\langle n_{3,i}\rangle_{i\in \N})\subseteq C_3\in q.$
 Again for each $i\in \N,$ let $n^{c}n_{3,i}=x_{3,i}\in A$. So $FS(\langle x_{3,i}\rangle_{i\in \N})\subseteq A.$ Hence for every $1\leq i\leq 2,$ and $y_i\in FS(G_i)\cup FP(G_i)$, $z\in FS(\langle x_{3,i}\rangle_{i\in \N})$, $p_{1}\in F_{1},\text{ and }p_{2}\in F_{2,f_{2}\left(x_{1}\right)}.$ we have
$zn^{p_{1}\left(y_{1}\right)+p_{2}\left(y_{2}\right)}\in A.$
As $FS(\langle x_{3,i}\rangle_{i\in \N})$ is an $IP$ set, and $B_3^*$ is piecewise syndetic set, we can use Theorem \ref{ess1} as before to find a set $G_3\subset FS(\langle x_{3,i}\rangle_{i\in \N})$ with $|G_3|=N_3.$

Now iterating this argument we have an infinite sequence of finite subsets $\langle G_{n}\rangle_{n\in\mathbb{N}}$
such that the desired result is true.
\end{proof}

Now we have the following corollary which shows that monochromatic Hindman's towers are abundant.

\begin{proof}[\textbf{Proof of Theorem \ref{p1}:}]
    
Let $I:\N\rightarrow \N$ be the polynomial defined by for all $x\in \N$, $I(x)=x.$ Let $f_2(x)=2^x,$ and define inductively $f_{n+1}(x)=(2^x)^{f_n(x)}$ for all $n\in \N$. In the above theorem if we choose these sequence of functions $(f_n)_{n\in \N}$, $F_{n,G}=\{I\}$ for all $G\in \mathcal{P}_f(\N),$ then we have a sequence of finite sets $\langle G_i\rangle_{i\in \N}$ such that for each $i\in \N,$ $G_i=\langle n^{x_{i,k}}\rangle_{k=1}^{N_i}$ such that the given pattern is monochromatic.
\end{proof}

\section*{Acknowledgement}  The author of this paper is supported by NBHM postdoctoral fellowship with reference no: 0204/27/(27)/2023/R \& D-II/11927.


\begin{thebibliography}{10}

\bibitem{2} V. Bergelson and A. Leibman, \textit{Polynomial extensions of van der Waerden and Szemerédi theorems}, J. Amer. Math. Soc. \textbf{9} (1996), 725–753.

\bibitem{3} V. Bergelson and A. Leibman, \textit{Set-polynomials and polynomial extension of the Hales–Jewett Theorem}, Ann. of Math. \textbf{150} (1999), 33–75.

\bibitem{6} M. Di Nasso and M. Ragosta, \textit{Central sets and infinite monochromatic exponential patterns}, arXiv:2211.16269.

\bibitem{F} H. Furstenberg, \textit{Recurrence in Ergodic Theory and Combinatorial Number Theory}, Princeton University Press, Princeton, NJ, 1981.

\bibitem{exp} S. Goswami and S. K. Patra, \textit{Exponential Schur and Hindman Theorem in Ramsey Theory}, arXiv:2411.19606.

\bibitem{7} A. W. Hales and R. I. Jewett, \textit{Regularity and positional games}, Trans. Amer. Math. Soc. \textbf{106} (1963), 222–229.

\bibitem{finitesum} N. Hindman, \textit{Finite sums from sequences within cells of a partition of $\mathbb{N}$}, J. Combin. Theory Ser. A \textbf{17} (1974), 1–11.

\bibitem{30} N. Hindman, \textit{Problems and new results in the algebra of $\beta S$ and Ramsey Theory}, in \textit{Unsolved Problems in Mathematics for the 21st Century}, J. Abe and S. Tanaka (eds.), IOS Press, Amsterdam, 2001, 295–305.

\bibitem{5} N. Hindman and D. Strauss, \textit{Algebra in the Stone–Čech Compactification: Theory and Applications}, 2nd ed., Walter de Gruyter, Berlin, 2012.

\bibitem{4} J. Sahasrabudhe, \textit{Exponential patterns in arithmetic Ramsey theory}, Acta Arith. \textbf{182} (2018), no. 1, 13–42.

\bibitem{sc} I. Schur, \textit{Über die Kongruenz $x^m + y^m \equiv z^m\,(\!\!\!\!\mod p)$}, Jahresber. Deutsch. Math.-Verein. \textbf{25} (1916), 114–117.

\bibitem{sisto} A. Sisto, \textit{Exponential triples}, Electron. J. Combin. \textbf{18} (2011), no. 1, Paper 147, 17 pp.

\bibitem{14} B. L. van der Waerden, \textit{Beweis einer baudetschen Vermutung}, Nieuw Arch. Wisk. \textbf{15} (1927), 212–216.

\bibitem{9} M. Walters, \textit{Combinatorial proofs of the polynomial van der Waerden and the polynomial Hales–Jewett Theorems}, J. London Math. Soc. \textbf{61} (2000), 1–12.

\bibitem{x} R. Xiao, \textit{Monochromatic quotients, products and polynomial sums in the rationals}, Discrete Math. \textbf{347}(3) (2024), Article 113795.

\end{thebibliography}
\end{document}